\def\@maketitle{%
  \normalfont\normalsize
  \@adminfootnotes
  \@mkboth{\@nx\shortauthors}{\@nx\shorttitle}%
  \global\topskip42\p@\relax 
  \@settitle
  \ifx\@empty\authors \else \@setauthors \fi
  \ifx\@empty\@dedicatory
  \else
    \baselineskip22\p@
    \vtop{{\small\itshape\@dedicatory\@@par}%
      \global\dimen@i\prevdepth}\prevdepth\dimen@i
  \fi
  \@setabstract
  \normalsize
  \if@titlepage
    \newpage
  \else
    \dimen@25\p@ \advance\dimen@-\baselineskip
    \vskip\dimen@\relax
  \fi
} 
\def\@settitle{%
  \vspace*{-15pt}
  \begin{flushleft}%
    \LARGE\bfseries
    \strut\@title\strut
  \end{flushleft}%
}
\def\@setauthors{%
  \begingroup
  \def\thanks{\protect\thanks@warning}%
  \trivlist
  \raggedright
  \large \@topsep27\p@\relax
  \advance\@topsep by -\baselineskip
  \item\relax
  \author@andify\authors
  \def\\{\protect\linebreak}%
  \authors
  \ifx\@empty\contribs
  \else
    ,\penalty-3 \space \@setcontribs
    \@closetoccontribs
  \fi
  \normalfont
  \endtrivlist
  \endgroup
}
\def\@setaddresses{\par
  \nobreak \begingroup
  \small\raggedright
  \def\author##1{\nobreak\addvspace\smallskipamount}%
  \def\\{\unskip, \ignorespaces}%
  \interlinepenalty\@M
  \def\address##1##2{\begingroup
    \par\addvspace\bigskipamount\noindent
    \@ifnotempty{##1}{(\ignorespaces##1\unskip) }%
    {\ignorespaces##2}\par\endgroup}%
  \def\curraddr##1##2{\begingroup
    \@ifnotempty{##2}{\nobreak\noindent\curraddrname
      \@ifnotempty{##1}{, \ignorespaces##1\unskip}\/:\space
      ##2\par}\endgroup}%
  \def\email##1##2{\begingroup
    \@ifnotempty{##2}{\nobreak\noindent E-mail address%
      \@ifnotempty{##1}{, \ignorespaces##1\unskip}\/:\space
      \ttfamily##2\par}\endgroup}%
  \def\urladdr##1##2{\begingroup
    \def~{\char`\~}%
    \@ifnotempty{##2}{\nobreak\noindent\urladdrname
      \@ifnotempty{##1}{, \ignorespaces##1\unskip}\/:\space
      \ttfamily##2\par}\endgroup}%
  \addresses
  \endgroup
  \global\let\addresses=\@empty
}
\def\@setabstracta{%
    \ifvoid\abstractbox
  \else
    \skip@15pt \advance\skip@-\lastskip
    \advance\skip@-\baselineskip \vskip\skip@
    \box\abstractbox
    \prevdepth\z@ 
    \vskip-15pt
  \fi
}
\renewenvironment{abstract}{%
  \ifx\maketitle\relax
    \ClassWarning{\@classname}{Abstract should precede
      \protect\maketitle\space in AMS document classes; reported}%
  \fi
  \global\setbox\abstractbox=\vtop \bgroup
    \normalfont\small
    \list{}{\labelwidth\z@
      \leftmargin0pc \rightmargin\leftmargin
      \listparindent\normalparindent \itemindent\z@
      \parsep\z@ \@plus\p@
      
    }%
    \item[\hskip\labelsep\bfseries\abstractname.]%
}{%
  \endlist\egroup
  \ifx\@setabstract\relax \@setabstracta \fi
}
\def\ps@headings{\ps@empty
  \def\@evenhead{%
    \setTrue{runhead}%
    \normalfont\scriptsize
    \rlap{\thepage}\hfill
    \def\thanks{\protect\thanks@warning}%
    \leftmark{}{}}%
  \def\@oddhead{%
    \setTrue{runhead}%
    \normalfont\scriptsize
    \def\thanks{\protect\thanks@warning}%
    \rightmark{}{}\hfill \llap{\thepage}}%
  \let\@mkboth\markboth
}\ps@headings
\def\section{\@startsection{section}{1}%
  \z@{-1.4\linespacing\@plus-.5\linespacing}{.8\linespacing}%
  {\normalfont\bfseries\Large}}
\def\subsection{\@startsection{subsection}{2}%
  \z@{-.8\linespacing\@plus-.3\linespacing}{.5\linespacing\@plus.2\linespacing}%
  {\normalfont\bfseries\large}}
\def\subsubsection{\@startsection{subsubsection}{3}%
  \z@{.7\linespacing\@plus.2\linespacing}{-1.5ex}%
  {\normalfont\bfseries}}
\def\@secnumfont{\bfseries}
\renewcommand\contentsnamefont{\bfseries}
\def\@starttoc#1#2{\begingroup
  \setTrue{#1}%
  \par\removelastskip\vskip\z@skip
  \@startsection{}\@M\z@{\linespacing\@plus\linespacing}%
    {.5\linespacing}{
      \contentsnamefont}{#2}%
  \ifx\contentsname#2%
  \else \addcontentsline{toc}{section}{#2}\fi
  \makeatletter
  \@input{\jobname.#1}%
  \if@filesw
    \@xp\newwrite\csname tf@#1\endcsname
    \immediate\@xp\openout\csname tf@#1\endcsname \jobname.#1\relax
  \fi
  \global\@nobreakfalse \endgroup
  \addvspace{32\p@\@plus14\p@}%
  \let\tableofcontents\relax
}
\def\contentsname{Contents}
\def\l@section{\@tocline{2}{.5ex}{0mm}{5pc}{}}
\def\l@subsection{\@tocline{2}{0pt}{2em}{5pc}{}}
\def\to{\mathchoice{\longrightarrow}{\rightarrow}{\rightarrow}{\rightarrow}}
\newcommand{\shortxra}[2][]{\ext@arrow 0359\rightarrowfill@{#1}{#2}}
\def\longrightarrowfill@{\arrowfill@\relbar\relbar\longrightarrow}
\newcommand{\longxra}[2][]{\ext@arrow 0359\longrightarrowfill@{#1}{#2}}
\def\addtagsub#1{\let\oldtf=\tagform@\def\tagform@##1{\oldtf{##1}\hbox{$_{#1}$}}}
\def\Nopagebreak{\@nobreaktrue\nopagebreak}
\newtheoremstyle{theorem-giventitle}
        {}{}              
        {\itshape}                      
        {}                              
        {\bfseries}                     
        {.}                             
        {\thm@headsep}                             
        {\thmnote{\bfseries#3}}
\newtheoremstyle{theorem-givenlabel}
        {}{}              
        {\itshape}                      
        {}                              
        {\bfseries}                     
        {.}                             
        {\thm@headsep}                             
        {\thmname{#1}~\thmnumber{#3}\setcurrentlabel{#3}}
\newtheoremstyle{definition-giventitle}
        {}{}              
        {}                      
        {}                              
        {\bfseries}                     
        {.}                             
        {\thm@headsep}                             
        {\thmnote{\bfseries#3}}
\def\setcurrentlabel#1{\gdef\@currentlabel{#1}}
\newtheorem{theorem}{Theorem}[section]
\newtheorem{lemma}[theorem]{Lemma}
\theoremstyle{definition}
\newtheorem{remark}[theorem]{Remark}
\theoremstyle{theorem-giventitle}
\newtheorem{theorem-named}{}
\theoremstyle{theorem-givenlabel}
\newtheorem{theorem-labeled}{Theorem}
\theoremstyle{definition-giventitle}
\newtheorem{definition-named}{}
\newtheorem{step-named}{}
\numberwithin{equation}{section}
\def\Z{\mathbb{Z}}
\def\Q{\mathbb{Q}}
\def\NN{\mathbb{N}}
\def\N{\mathcal{N}}
\def\F{\mathcal{F}}
\def\cA{\mathcal{A}}
\def\cC{\mathcal{C}}
\def\Ker{\operatorname{Ker}}
\def\Coker{\operatorname{Coker}}
\def\sign{\operatorname{sign}}
\def\rank{\operatorname{rank}}
\def\Arf{\operatorname{Arf}}
\def\lsign{\sign^{(2)}}
\def\Lt{L^2}
\def\rhot{\rho^{(2)}}
\def\setminus{\smallsetminus}
\begin{document}

\vspace*{0mm}

\title[Knots having the same Seifert form]{Knots having the same Seifert form and primary decomposition of knot concordance}

\author{Taehee Kim}
\address{
  Department of Mathematics\\
  Konkuk University \\
  Seoul 05029\\
  Korea
}
\email {tkim@konkuk.ac.kr}

\def\subjclassname{\textup{2010} Mathematics Subject Classification}
\expandafter\let\csname subjclassname@1991\endcsname=\subjclassname
\expandafter\let\csname subjclassname@2000\endcsname=\subjclassname
\subjclass{%
 57M25, 
 57N70
}

\keywords{Knot, Concordance, Seifert form, Amenable signature}

\begin{abstract}
We show that for each Seifert form of an algebraically slice knot with nontrivial Alexander polynomial, there exists an infinite family of knots having the Seifert form such that the knots are linearly independent in the knot concordance group and not concordant to any knot with coprime Alexander polynomial.  Key ingredients for the proof are Cheeger--Gromov--von Neumann $\rhot$-invariants for amenable groups developed by Cha--Orr and polynomial splittings of metabelian  $\rhot$-invariants.
\end{abstract}

\maketitle


\section{Introduction}

A knot is \emph{slice} if it bounds a locally flat 2-disk in the 4-ball, and two knots $K$ and $J$ are \emph{concordant} if $K\# (-J)$ is slice. The concordance classes of knots form an abelian group under connected sum. This abelian group is called \emph{the knot concordance group}, which we denote by $\cC$. There is a surjective homomorphism from $\cC$ to the algebraic concordance group of Seifert forms which sends the concordance class of a knot to the algebraic concordance class of a Seifert form of the knot. It is known by Levine \cite{Levine:1969-2, Levine:1969-1} and Stoltzfus \cite{Stoltzfus:1977-1}  that the algebraic concordance group is isomorphic to $\Z^\infty\oplus \Z_2^\infty \oplus \Z_4^\infty$. The kernel of the above surjection is the subgroup of (the concordance classes of) \emph{algebraically slice knots}, denoted $\cA$. The classification of the group $\cA$ (and $\cC)$ is yet unknown, and in this paper we address the structure of $\cA$ related to Seifert forms and the Alexander polynomial. Note that a knot with trivial Alexander polynomial is slice by the work of Freedman \cite{Freedman:1982-1, Freedman-Quinn:1990-1}. 

\begin{theorem}[Main Theorem]
\label{theorem:main-theorem}
	Let $V$ be a Seifert form of an algebraically slice knot $K$ with nontrivial Alexander polynomial $\Delta_K(t)$. Then there exists an infinite family of knots $\{K_i\}_{i\in \NN}$ which satisfies  the following:
	\begin{enumerate}
		\item the knots $K_i$ have the Seifert form $V$, 
		\item the knots $K_i$ are linearly independent in $\cC$,
		\item for each $i$ and nonzero integer $n$,  the knot $nK_i$ is not concordant to \emph{any} knot whose Alexander polynomial is coprime to $\Delta_K(t)$.
	\end{enumerate}
\end{theorem}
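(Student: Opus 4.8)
The plan is to realize each $K_i$ as a genetic infection (satellite) of the given knot $K$ along a fixed curve that leaves the Seifert surface intact, and to obstruct sliceness and concordance by means of the amenable Cheeger--Gromov--von Neumann $\rhot$-invariants, using a polynomial splitting to isolate a single irreducible factor of $\Delta_K(t)$ and thereby force the Alexander-polynomial condition in~(3). Concretely, since $K$ is algebraically slice with $\Delta_K(t)\neq 1$, its Seifert form $V$ is metabolic and $\Delta_K(t)\doteq\lambda(t)\lambda(t^{-1})$ for some $\lambda$; fix an irreducible symmetric factor $p(t)\mid\Delta_K(t)$. Let $F$ be a Seifert surface for $K$ and choose a simple closed curve $\eta$ in the complement of $F$, unknotted in $S^3$, whose class in the rational Alexander module $A(K):=H_1(S^3\setminus K;\Q[t,t^{-1}])$ is nonzero and lies in the $p(t)$-primary summand. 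Let $\{J_i\}_{i\in\NN}$ be knots with vanishing Arf invariant whose integral Levine--Tristram (equivalently $\Lt$) signature functions are linearly independent over $\Q$, and define $K_i$ by infecting $K$ along $\eta$ by $J_i$. Because the infection is supported away from $F$, the surface $F$ persists and the Seifert form is unchanged, so every $K_i$ has Seifert form $V$; this is~(1).

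For nontriviality and linear independence I would use the amenable signature theorem of Cha--Orr. Suppose a nontrivial combination $W:=\bigconnsum_i n_iK_i$ were slice, with slice-disk exterior $X$. Form the metabelian quotient $\Gamma$ of $\pi_1(S^3\setminus W)$ given by $\Z$ acting on a $p(t)$-primary quotient of the rational Alexander module; $\Gamma$ is amenable and lies in Cha--Orr's class, so the associated $\rhot$-invariant of the boundary must vanish for every coefficient system extending over $X$, and by the signature theorem these are exactly the systems whose defining character annihilates a metabolizer of the rational Blanchfield form $\Bl_W$. The infection formula computes this invariant as $\sum_i n_i\,c_i$, where $c_i$ is the integral of the signature function of $J_i$ over the arc of $S^1$ on which the character is nontrivial, the weights recording the image of $\eta$ in the $p(t)$-primary part. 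A counting argument in the $p(t)$-primary Blanchfield module shows that no metabolizer can annihilate all detecting characters: the $p(t)$-primary part is large enough that a character supported there and nonvanishing on $[\eta]$ survives, and for such a character the $\Q$-linear independence of the $c_i$ forces every $n_i=0$, giving~(2).

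For the crux, part~(3), suppose $nK_i$ ($n\neq 0$) were concordant to a knot $K'$ with $\gcd(\Delta_{K'}(t),\Delta_K(t))=1$; then $W:=nK_i\#(-K')$ is slice and $\Delta_W(t)\doteq\Delta_K(t)^n\,\Delta_{K'}(t)$. The key point is that the metabelian $\rhot$-invariant, computed through characters supported on the $p(t)$-primary summand of $A(W)$, splits along the primary decomposition of the rational Alexander module: since $p(t)\mid\Delta_K$ but $p(t)\nmid\Delta_{K'}$, the $p(t)$-primary part of $A(K')$ vanishes, so $K'$ contributes nothing to the $p(t)$-localized invariant, which therefore equals $n$ times the nonzero infection contribution of $K_i$. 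As before, no metabolizer of $\Bl_W$ can kill the detecting character on the $p(t)$-primary part, so the amenable signature theorem produces a nonzero obstruction, contradicting sliceness of $W$. Hence no $nK_i$ is concordant to a knot whose Alexander polynomial is coprime to $\Delta_K(t)$, which is~(3).

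The main obstacle I expect is precisely this polynomial splitting in the presence of an \emph{arbitrary} slice exterior for $W$. One must show that the metabelian $\rhot$-defect localizes at $p(t)$ and is thus insensitive to the coprime factor $\Delta_{K'}$, and that the metabolizer of the rational Blanchfield form of $W$ respects the $p(t)$-primary decomposition, so that the dimension of the space of characters supported on the $p(t)$-primary part and vanishing on a metabolizer remains large enough to retain a character nonvanishing on $[\eta]$. Making this count uniform in $n$---so that the multiplicity $n$ cannot be exploited to enlarge the metabolizer and cancel the defect---is the technical heart of the argument; the amenable signature theorem is then what converts the surviving nonzero $\rhot$-contribution into the desired sliceness obstruction.
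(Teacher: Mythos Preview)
The overall architecture of your proposal matches the paper: satellite/infection of $K$ by knots $J_i$ with controlled Levine--Tristram signatures, Seifert form preserved because the infection curve misses $F$, amenable $\rhot$-invariants as the sliceness obstruction, and polynomial splitting for (3). However, there is a genuine gap in your construction that breaks both (2) and (3) as written: a \emph{single} infection curve $\eta$ is not enough.

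Here is the problem. For part (3) (and similarly for (2)), after passing to the connected sum you must find, for a given metabolizer $P\subset H_1(M(nK_i);\Q[t^{\pm1}])\cong A(K)^n$, an element $x=(x_1,\dots,x_n)\in P$ whose associated character $\phi_x$ sees the infection, i.e.\ $B\ell(x_j,\eta)\neq 0$ for some $j$. Even if you manage to choose $\eta$ non-isotropic, components $x_j$ can be nonzero yet lie in $\eta^\perp$; each such summand then contributes only $\rhot(M(K),\phi_j)\in[-C_K,C_K]$ with no $\int_{S^1}\sigma_{J_i}$ term, so you cannot bound the sum away from zero. Your infection formula ``$\sum_i n_i c_i$'' is therefore not what the computation actually produces: the coefficients are not $n_i$, but the number of summands on which $\phi_x(\eta)$ is nontrivial, plus uncontrolled bounded error terms from $M(K)$. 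The paper circumvents this by infecting along \emph{all} $2g$ curves $\eta_1,\dots,\eta_{2g}$ dual to the bands of $F$. These generate $H_1(M(K);\Q[t^{\pm1}])$ (and $H_1(M(K);\Z_p[t^{\pm1}])$ for every prime $p$), so nondegeneracy of the Blanchfield form guarantees that whenever $x_j\neq 0$ there is some $\ell$ with $B\ell(x_j,\eta_\ell)\neq 0$, forcing at least one $\int_{S^1}\sigma_{J_i}$ contribution in every nonzero summand. Choosing $\int_{S^1}\sigma_{J_i}>C_K$ then makes each such summand strictly positive and the contradiction follows. For (3) the paper does not argue directly with the metabolizer of $nK_i\#(-K')$ as you propose, but instead shows $nK_i$ fails to have vanishing metabelian $\rhot$-invariants and then invokes the Kim--Kim polynomial splitting theorem; your direct route is morally equivalent, but the single-curve defect is fatal either way.

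Your treatment of (2) has a second, independent issue. $\Q$-linear independence of the integrals $\int_{S^1}\sigma_{J_i}$ is not what is used, because the error terms $\rhot(M(K),\phi_j)$ are only bounded, not zero, and the actual coefficients of the $\int\sigma_{J_i}$ are not the $n_i$. The paper instead works with mixed-coefficient quotients $\pi_1W/(\pi_1W)^{(2)}_{p_1}$ over $\Z_{p_1}$, chooses an increasing sequence of primes $p_i>a_K$, and arranges the $J_i$ so that the finite signature sums $\sum_{r}\sigma_{J_j}(e^{2\pi r\sqrt{-1}/p_i})$ vanish for $j>i$ but exceed $p_iC_K$ for $j=i$; the ``counting argument'' you allude to becomes the statement that the map $H_1(M(K);\Z_{p_1}[t^{\pm1}])\to H_1(W;\Z_{p_1}[t^{\pm1}])$ has image of $\Z_{p_1}$-rank at least half, so some $\eta_\ell$ survives in the $4$-manifold $W$ assembled from a slice-disk exterior and auxiliary cobordisms. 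Again, this step uses that the $\eta_\ell$ generate.
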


We review known results on the structure of knot and link concordance under fixed Alexander invariants and primary decomposition of $\cC$.
The aforementioned work of Freedman is equivalent to that if $\Delta_K(t)=1$, then $K$ is concordant to the unknot. Namely, trivial Alexander polynomial determines a unique concordance class. A natural question arises asking if there is any other Alexander polynomial or a Seifert form which determines a unique concordance class.  
This question was answered in the negative by Livingston \cite{Livingston:2002-1} using Casson--Gordon invariants under a certain condition on Seifert forms. Later, using Cheeger--Gromov--von Neumann $\rhot$-invariants, the author removed the condition on Seifert forms and gave the following theorem:

\begin{theorem}\label{theorem:old_main theorem}\cite{Kim:2005-1}
Let $V$ be a Seifert form of a knot $K$ with nontrivial Alexander polynomial. Then, there exists an infinite family of knots $\{K_i\}_{i\in \NN}$ such that $K_i$ have the Seifert form $V$ and are pairwise nonconcordant.   
\end{theorem}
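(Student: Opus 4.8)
The plan is to realize the whole family $\{K_i\}$ by a single satellite (infection) construction tuned so that a fixed Seifert surface is never disturbed, and then to separate concordance classes by a metabelian Cheeger--Gromov--von Neumann $\rhot$-invariant. First I would fix a Seifert surface $F\subset S^3$ for $K$ realizing the Seifert form $V$, and choose an unknotted simple closed curve $\eta\subset S^3\setminus F$ lying in the commutator subgroup of $\pi_1(S^3\setminus K)$, together with an auxiliary knot $J$. Infecting $K$ along $\eta$ by $J$ produces a satellite knot $K_\eta(J)$; because the construction is supported in a solid-torus neighborhood of $\eta$ disjoint from $F$, the surface $F$ persists as a Seifert surface for $K_\eta(J)$ and the Seifert form is unchanged. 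Taking $K_i:=K_\eta(\#^{i}J)$, the infection by the $i$-fold connected sum of a fixed $J$, immediately yields an infinite family all carrying the form $V$, which is the first of the two required properties.

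The role of the hypothesis $\Delta_K(t)\neq1$ enters through the choice of $\eta$. Since $\eta$ is null-homologous in $S^3\setminus K$, its class $[\eta]$ lives in the rational Alexander module $H_1(S^3\setminus K;\Q[t^{\pm1}])$, which is a nonzero finite-dimensional $\Q$-vector space carrying the nonsingular Blanchfield pairing $\Bl$ exactly because $\Delta_K(t)\neq1$. I would choose $\eta$ so that $[\eta]$ lies outside \emph{every} metabolizer of $\Bl$; this is possible because there are only finitely many such metabolizers, each a proper subspace, and a finite-dimensional vector space over the infinite field $\Q$ is never a finite union of proper subspaces. In the cleanest cases one simply arranges $\Bl([\eta],[\eta])\neq0$, which forces $[\eta]$ out of all metabolizers at once, since metabolizers are self-annihilating. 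Finally I would fix $J$ with nonzero integrated Levine--Tristram signature $\int_{S^1}\sigma_J\,d\omega\neq0$, for instance a trefoil.

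To separate classes I would invoke the $L^2$-signature sliceness obstruction of Cochran--Orr--Teichner at the metabelian level. For $i\neq j$, suppose $K_i\#(-K_j)$ were slice and let $M$ be its zero-surgery manifold, bounding the exterior $W$ of a slice disk. Sliceness forces the Blanchfield form of $K_i\#(-K_j)$, namely $\Bl\oplus(-\Bl)$, to possess a metabolizer $P$, and the theory provides an associated metabelian representation $\varphi_P\colon\pi_1(M)\to\Gamma$ that extends over $W$ and for which $\rhot(M,\varphi_P)=0$. On the other hand $K_i\#(-K_j)$ is itself an infection of $K\#(-K)$ along the two curves coming from $\eta$, so the infection formula computes $\rhot(M,\varphi_P)$ as a signed combination of $\int_{S^1}\sigma_{\#^{i}J}=i\int_{S^1}\sigma_J$ and $\int_{S^1}\sigma_{\#^{j}J}=j\int_{S^1}\sigma_J$, weighted by whether $\varphi_P$ is nontrivial on each infection curve. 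By the choice of $\eta$ its classes sit outside $P$, so $\varphi_P$ is nontrivial there and the surviving value equals $(i-j)\int_{S^1}\sigma_J\neq0$, contradicting $\rhot(M,\varphi_P)=0$. Hence $K_i\#(-K_j)$ is not slice, i.e.\ the $K_i$ are pairwise nonconcordant, which is the second required property.

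The heart of the matter, and the step I expect to be the main obstacle, is the universal quantifier hidden in ``for every metabolizer'': the Cochran--Orr--Teichner theorem only asserts that \emph{some} metabolizer-induced representation kills $\rhot$, so I must guarantee a nonzero infection contribution no matter which metabolizer $P$ the hypothetical slice disk manufactures. This is precisely what the placement of $[\eta]$ outside the finite set of all metabolizers secures, and it is here that $\Delta_K(t)\neq1$ is indispensable. The only remaining care is bookkeeping: tracking the orientation-reversal sign that turns the $-K_j$ summand's contribution into $-\,j\int_{S^1}\sigma_J$, and verifying that the induced representation restricts on each infecting knot to the abelianization, so that its $\rhot$ really is the integrated signature $\int_{S^1}\sigma_J$ rather than a single signature value.
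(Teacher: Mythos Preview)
This theorem is quoted from \cite{Kim:2005-1} and not reproved in the present paper, so there is no proof here to match against directly; the paper's argument for the stronger Theorem~\ref{theorem:main-theorem} does, however, expose where your outline falls short. Two steps do not go through. First, the rational Blanchfield form need not have only finitely many metabolizers (already $\Bl\oplus(-\Bl)$ on $A\oplus A$ has one for each isometry of $(A,\Bl)$, typically infinitely many), so the ``finite union of proper subspaces'' argument fails. Your fallback condition $\Bl([\eta],[\eta])\ne0$ is the correct one and does force $(\eta,0)$ and $(0,\eta)$ out of every self-annihilating submodule of $\Bl\oplus(-\Bl)$, but you have not shown such an $\eta$ exists; this is a lemma in \cite{Kim:2005-1} that genuinely uses $\Delta_K\ne1$, and without it the bridge from ``$[\eta]$ avoids metabolizers of $\Bl$'' to ``$[\eta]$ avoids metabolizers of $\Bl\oplus(-\Bl)$'' is missing.

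Second, and more seriously, the infection formula (Lemma~\ref{lemma:computation-of-rho-invariant}, from \cite{Cochran-Harvey-Leidy:2009-1}) carries a base term you have silently dropped:
\[
\rhot\bigl(M(K_i\#(-K_j)),\phi_x\bigr)=\rhot\bigl(M(K\#(-K)),\phi'\bigr)+(\text{infection terms}).
\]
The term $\rhot(M(K\#(-K)),\phi')$ is not zero in general and depends on the uncontrolled metabolizer $P$. Both \cite{Kim:2005-1} and this paper absorb it via the Cheeger--Gromov universal bound $C_K$ of \cite{Cheeger-Gromov:1985-1} and then choose the infecting knots so that their signature integrals strictly dominate that bound (this is exactly Lemma~\ref{lemma:J_i}(4) here). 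With your family $K_i=K_\eta(\#^iJ)$ the net infection contribution can be as small as $\int_{S^1}\sigma_J$, so at minimum you would need $\bigl|\int_{S^1}\sigma_J\bigr|$ to exceed $C_{K\#(-K)}$; a single trefoil does not suffice, and the sentence ``the surviving value equals $(i-j)\int_{S^1}\sigma_J$'' is incorrect as written.
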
   
In \cite{Kim:2005-1} it was not shown that the $K_i$ in Theorem~\ref*{theorem:old_main theorem} are linearly independent in $\cC$, and Theorems~\ref{theorem:main-theorem}(1) and (2) extend Theorem~\ref{theorem:old_main theorem} for the case of Seifert forms of algebraically slice knots by giving examples which are linearly independent in $\cC$. 

Theorem~\ref{theorem:old_main theorem} was extended in various directions. Cochran and the author \cite{Cochran-Kim:2004-1} gave an infinite family of pairwise nonconcordant knots having the same \emph{higher-order} Alexander invariants, and it was extended further in \cite{Kim:2016-1} so that the knots are linearly independent in $\cC$. Cha, Friedl, and Powell \cite{Cha-Friedl-Powell:2014-1} generalized Theorem~\ref{theorem:old_main theorem} to link concordance. Recently Kauffman and Lopes gave infinitely many \emph{nonisotopic} pretzel knots with the same Alexander invariants \cite{Kauffman-Lopes:2017-1}.

Theorem~\ref{theorem:main-theorem}(3) is related to primary decomposition of knot concordance. A theorem of Levine \cite{Levine:1969-2} playing an essential role in classification of the algebraic concordance group is that if the connected sum of two knots with coprime Alexander polynomials is algebraically slice, then so are the knots. A similar decomposition in $\cC$ or $\cA$ is unknown, and we have the following open question: if two knots $K$ and $J$ have coprime Alexander polynomials and the connected sum $K\# J$ is slice, then are $K$ and $J$ slice? Put differently, if $K$ and $J$ have coprime Alexander polynomials and any of $K$ and $J$ is not slice, then is $K$ nonconcordant to $J$?

Regarding the above question, Se-Goo Kim \cite{Kim:2005-2} showed splittings of Casson--Gordon invariants for knots with coprime Alexander polynomials. A similar polynomial splitting property of the metabelian Cheeger--Gromov--von Neumann $\rhot$-invariants was shown by  Se-Goo Kim and the author \cite{Kim-Kim:2008-1} (see Theorem~\ref{theorem:splitting}),  and it was extended to splittings of \emph{higher-order} $\rhot$-invariants \cite{Kim-Kim:2014-1}. On smooth concordance, similar polynomial splittings of $d$-invariants on slicing knots \cite{Bao:2015-1} and doubly slicing knots \cite{Kim-Kim:2016-1} were also shown. 

In \cite{Kim:2005-2, Kim-Kim:2008-1}, the examples of knots which are not concordant to any knot with coprime Alexander polynomial were given, but they were constructed for \emph{some} prescribed Alexander modules. For instance, the examples in \cite{Kim-Kim:2008-1} have Alexander modules which have a unique nontrivial proper submodule.  On the other hand, Theorem~\ref{theorem:main-theorem} gives examples for \emph{any} Seifert form of an algebraically slice knot with nontrivial Alexander polynomial. 

There is the solvable filtration $\{\F_n\}$ of $\cC$ defined in \cite{Cochran-Orr-Teichner:1999-1}, which is indexed by nonnegative half-integers. A notable property of $\{\F_n\}$ is that all metabelian sliceness obstructions, including Casson--Gordon invariants, vanish for knots in $\F_n$ when $n\ge 1.5$. Nevertheless,  for each $n$, Cochran, Harvey, and Leidy \cite{Cochran-Harvey-Leidy:2009-2} gave a similar primary decomposition of a family of knots in $\F_n$ constructed using \emph{robust doubling operators} (see Definitions~4.4 and 7.2 and Theorem~7.7 in \cite{Cochran-Harvey-Leidy:2009-2}). Also, there is a similar primary decomposition of a family of order 2 elements in $\F_n$ \cite{Cochran-Harvey-Leidy:2009-3, Jang:2015-1}. In \cite{Cochran-Harvey-Leidy:2009-2, Cochran-Harvey-Leidy:2009-3, Jang:2015-1},  the examples of knots were shown to be nonconcordant to any knot  with coprime Alexander polynomial which is constructed using doubling operators and Arf invariant zero knots (for instance, see \cite[Theorem~6.2]{Cochran-Harvey-Leidy:2009-2}).
 
To construct $K_i$ in Theorem~\ref{theorem:main-theorem}, we use (iterated) satellite construction. To show their linear independence in $\cC$, we use Cheeger--Gromov--von Neumann $\rhot$-invariants for amenable groups, which were developed by Cha and Orr \cite{Cha-Orr:2009-1} on homology cobordism, and later adapted to knot concordance by Cha \cite{Cha:2010-1} (see Theorem~\ref{theorem:obstruction}). To show Theorem~\ref{theorem:main-theorem}(3), we use polynomial splittings of metabelian $\rhot$-invariants in \cite{Kim-Kim:2008-1} (see Theorem~\ref{theorem:splitting}). 

This paper is organized as follows. We review necessary results on $\rhot$-invariants in Section~\ref{section:preliminaries}. In Section~\ref{section:proof}, we give a proof of Theorem~\ref{theorem:main-theorem}. In this paper, homology groups are with integer coefficients unless specified otherwise. By abuse of notation, we use the same symbol for a knot and its homology and homotopy classes. For a prime $p$, we denote the field of $p$ elements by $\Z_p$. All manifolds are assumed to be oriented and compact. 
\subsection*{Acknowledgments} 
This research was supported by Basic Science Research Program through the National Research Foundation of Korea(NRF) funded by the Ministry of Education (no. 2011-0030044(SRC-GAIA) and no. 2015R1D1A1A01056634).

\section{Preliminaries}\label{section:preliminaries}
In this section, we review necessary results on $\rhot$-invariants in \cite{Cochran-Orr-Teichner:1999-1, Cha:2010-1, Kim-Kim:2008-1}.

Let $M$ be a closed 3-manifold and $\phi\colon \pi_1M\to \Gamma$ a homomorphism to a countable (discrete) group $\Gamma$. By enlarging the group $\Gamma$ if necessary, we may assume that there exists a 4-manifold $W$ with $\partial W=M$ such that $\phi$ extends to $\tilde{\phi}\colon \pi_1W \to \Gamma$. Then, \emph{the Cheeger--Gromov--von-Neumann $\rhot$-invariant} associated with $(M,\phi)$ \cite{Cheeger-Gromov:1985-1} can be defined to be the $\Lt$-signature defect as follows:
\[
\rhot(M, \phi):=\lsign_\Gamma(W) - \sign (W).
\]
In the above, $\sign(W)$ is the ordinary signature of $W$, and $\lsign_\Gamma(W)$ is the $\Lt$-signature of the intersection form on $H_2(W;\N\Gamma)$ where $\N\Gamma$ denotes the group von Neumann algebra of $\Gamma$. We refer the reader to \cite[Section~2]{Cha:2010-1} for more details on $\rhot$-invariants. Based on the work on $\rhot$-invariants in \cite{Cha-Orr:2009-1}, Cha obtained the following sliceness obstruction, which extends the sliceness obstruction using $\rhot$-invariants in \cite{Cochran-Orr-Teichner:1999-1}. In the following, $M(K)$ denotes the zero-framed surgery on a knot $K$ in $S^3$. 

\begin{theorem}\label{theorem:obstruction}\cite[Theorem~1.2]{Cha:2010-1}
	Suppose $K$ is a slice knot and $\Gamma$ is an amenable group lying in Strebel'€™s class $D(R)$ for some ring $R$. If $\phi\colon \pi_1M(K) \to \Gamma$ is a homomorphism extending to a slice disk exterior for $K$, then $\rhot(M(K),\phi)=0$.
\end{theorem}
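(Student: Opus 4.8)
The plan is to use a slice disk exterior for $K$ as a canonical null-bordism of $M(K)$ and to show that both terms in the signature defect $\lsign_\Gamma(W)-\sign(W)$ vanish. Since $K$ is slice and $\phi$ extends over a slice disk exterior by hypothesis, I would fix a locally flat slice disk $D\subset B^4$ whose exterior $W=B^4\setminus\nu(D)$ carries an extension $\tilde{\phi}\colon\pi_1W\to\Gamma$ of $\phi$, with $\partial W=M(K)$. A standard Alexander duality computation shows that $W$ is a $\Z$-homology $S^1\times D^3$; in particular $H_2(W;\Z)=0$, whence $\sign(W)=0$, and $H_*(W;R)\cong H_*(S^1;R)$ for every coefficient ring $R$. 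Since $W$ is then an admissible bounding $4$-manifold for $(M(K),\phi)$, we have
\[
\rhot(M(K),\phi)=\lsign_\Gamma(W)-\sign(W)=\lsign_\Gamma(W),
\]
so the problem reduces to proving $\lsign_\Gamma(W)=0$.

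To compute $\lsign_\Gamma(W)$ I would adapt the Cochran--Orr--Teichner argument showing that a slice disk exterior furnishes a metabolizer for the equivariant intersection form. The form $\lambda$ on $H_2(W;\N\Gamma)$ is singular because $\partial W\neq\varnothing$, but Poincar\'e--Lefschetz duality over $\N\Gamma$ shows that the image
\[
P=\Im\big(H_2(W;\N\Gamma)\to H_2(W,M(K);\N\Gamma)\big)
\]
is self-annihilating for $\lambda$. If one can show that $P$ realizes exactly half of the von Neumann dimension of the nonsingular part of $\lambda$, so that $P$ is a genuine Lagrangian, then a form admitting such a half-dimensional isotropic submodule has vanishing $L^2$-signature, and $\lsign_\Gamma(W)=0$ follows.

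The main obstacle — and the place where the hypotheses on $\Gamma$ are indispensable — is establishing this half-dimensionality with $\N\Gamma$-coefficients. In the classical PTFA setting one localizes $\Z\Gamma$ at a skew field of fractions and counts ordinary dimensions, but a general amenable $\Gamma$ admits no such Ore localization, so the analytic (von Neumann) dimension theory must compensate. I would invoke the two standing assumptions directly: amenability yields L\"uck-type dimension-flatness of $\N\Gamma$ over $R\Gamma$, so that each $\ldim_\Gamma H_i(W;\N\Gamma)$ is controlled by the $R$-coefficient homology $H_i(W;R)$, which here is that of $S^1$; and membership of $\Gamma$ in Strebel's class $D(R)$ supplies the chain-level injectivity needed to compare the $\N\Gamma$-complex of $W$ with its $R$-reduction without losing rank, making the dimension bookkeeping rigorous. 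Assembling these inputs is precisely the content of the Cha--Orr amenable signature theorem, and I expect the genuinely delicate step to be verifying that amenability together with Strebel's condition force $P$ to be exactly half-dimensional — that is, showing the self-annihilating submodule is a true Lagrangian when no skew field of fractions is available. Once this is in place, feeding $W$ into the resulting signature theorem gives $\lsign_\Gamma(W)=\sign(W)=0$, and hence $\rhot(M(K),\phi)=0$.
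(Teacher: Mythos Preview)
The paper does not give its own proof of this statement; it is quoted verbatim from \cite{Cha:2010-1} and used as a black box, so there is nothing in the present paper to compare against.

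Your overall strategy---take the slice disk exterior $W$ as a null-bordism, note $H_2(W;\Z)=0$ so $\sign(W)=0$, and then appeal to the Cha--Orr amenable signature machinery to get $\lsign_\Gamma(W)=\sign(W)$---is indeed the argument in \cite{Cha:2010-1}, and your final paragraph correctly identifies the analytic inputs (amenable dimension-flatness plus Strebel's injectivity) that replace Ore localization. However, the middle paragraph is muddled: your $P=\Im\bigl(H_2(W;\N\Gamma)\to H_2(W,M(K);\N\Gamma)\bigr)$ is not a self-annihilating submodule for the intersection form $\lambda$ on $H_2(W;\N\Gamma)$; by Poincar\'e--Lefschetz duality this image \emph{is} the nonsingular part of $\lambda$, so speaking of it as ``half-dimensional'' makes no sense. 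The Cha--Orr argument is not a Lagrangian argument in this form at all: it uses the Strebel condition to control $\N\Gamma$-chain maps by $R$-coefficient ones and amenability to pass to $L^2$-dimensions, concluding directly that the $\N\Gamma$-intersection form has the same signature as the ordinary one. You should excise the Lagrangian discussion and proceed straight from $H_*(W;R)\cong H_*(S^1;R)$ to the conclusion via \cite[Theorem~6.6]{Cha-Orr:2009-1}.
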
 

One can find the definitions of amenable group and Strebel's class $D(R)$ in \cite{Cha-Orr:2009-1}, but they will not be needed in this paper; we will only need Lemma~\ref{lemma:amenable group} below.

For a group $G$, let $G^{(1)}:= [G,G]$, the commutator subgroup of $G$, and let $G^{(2)}:=[G^{(1)},G^{(1)}]$. For a prime $p$, we also define
\[
G^{(2)}_p:=\Ker\{G^{(1)}\to (G^{(1)}/G^{(2)})\otimes \Z_p\}.
\]

\begin{lemma}\label{lemma:amenable group} Suppose $G$ is a group with $H_1(G)\cong \Z$. Then, for each prime $p$, the group $G/G^{(2)}_p$ is amenable and lies in Strebel's class $D(\Z_p)$. 
\end{lemma}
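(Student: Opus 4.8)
The plan is to identify $G/G^{(2)}_p$ explicitly as an extension of $\Z$ by an elementary abelian $p$-group, and then to read off both amenability and membership in $D(\Z_p)$ from the closure properties of these two classes of groups. Writing $\Gamma := G/G^{(2)}_p$, the hypothesis $H_1(G)\cong\Z$ says $G/G^{(1)}\cong\Z$; since $G^{(2)}_p\subseteq G^{(1)}$ is normal in $G$, the subgroup $G^{(1)}/G^{(2)}_p$ is normal in $\Gamma$ with quotient $\Gamma/(G^{(1)}/G^{(2)}_p)\cong G/G^{(1)}\cong\Z$. To understand the kernel, I would observe that the defining map $G^{(1)}\to (G^{(1)}/G^{(2)})\otimes\Z_p$ is the composite of the abelianization $G^{(1)}\twoheadrightarrow G^{(1)}/G^{(2)}$ with reduction modulo $p$, hence is surjective; therefore $G^{(1)}/G^{(2)}_p\cong (G^{(1)}/G^{(2)})\otimes\Z_p$, which is a $\Z_p$-vector space. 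This yields a short exact sequence
\[
1\longrightarrow (G^{(1)}/G^{(2)})\otimes\Z_p \longrightarrow \Gamma \longrightarrow \Z\longrightarrow 1,
\]
exhibiting $\Gamma$ as an (elementary abelian $p$-group)-by-$\Z$ group, in particular metabelian.

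Granting this structure, both conclusions follow formally. For amenability, I would invoke that the class of amenable groups contains all abelian groups and is closed under extensions; since both the kernel and the quotient in the sequence above are abelian, $\Gamma$ is amenable. For Strebel's class, I would use three facts from Strebel's work (as recorded by Cha--Orr in \cite{Cha-Orr:2009-1}): that $D(\Z_p)$ is closed under group extensions, that every torsion-free abelian group---in particular $\Z$---lies in $D(\Z_p)$, and that every elementary abelian $p$-group lies in $D(\Z_p)$. Applying extension-closure to the displayed sequence then places $\Gamma$ in $D(\Z_p)$.

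The main obstacle is the Strebel-class input, specifically the membership of the torsion kernel $(G^{(1)}/G^{(2)})\otimes\Z_p$ in $D(\Z_p)$, since amenability alone does not force this. The point is that over $R=\Z_p$ the group ring of a cyclic factor is local artinian, so any map of free $R\Gamma$-modules that becomes injective after tensoring along the augmentation ideal admits a left inverse modulo that ideal and hence splits, making the original map injective; the general, possibly infinite-rank, elementary abelian case is then handled by the extension- and colimit-closure properties of $D(\Z_p)$. Once this membership is secured, the extension-closure of $D(\Z_p)$ and of amenability deliver both halves of the statement.
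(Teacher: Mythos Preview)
Your proposal is correct and follows essentially the same route as the paper: both identify $G/G^{(2)}_p$ as an extension of $\Z$ by the abelian group $G^{(1)}/G^{(2)}_p$, observe that this kernel has only $p$-torsion (you go slightly further and note it is actually $(G^{(1)}/G^{(2)})\otimes\Z_p$), and then invoke the Cha--Orr closure results to conclude amenability and membership in $D(\Z_p)$. The only difference is packaging---the paper cites \cite[Lemma~6.8]{Cha-Orr:2009-1} as a black box covering both conclusions, whereas you unpack the extension-closure argument and sketch the Strebel-class membership of the $p$-torsion kernel by hand; your final paragraph is extra justification rather than a different idea.
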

\begin{proof}
Since $G/G^{(1)}\cong H_1(G)\cong \Z$ and $G^{(1)}/G_p^{(2)}$ injects into $(G^{(1)}/G^{(2)})\otimes \Z_p$, the groups $G^{(1)}/G^{(2)}$ and $G^{(1)}/G_p^{(2)}$ are abelian and have no torsion coprime to $p$. Now the conclusion follows from \cite[Lemma~6.8]{Cha-Orr:2009-1}.
\end{proof}

We review a vanishing criterion for metabelian $\rhot$-invariants for slice knots in\cite{Cochran-Orr-Teichner:1999-1}. For a knot $K$, there is \emph{the rational Blanchfield form} 
\[
B\ell\colon H_1(M(K);\Q[t^{\pm 1}])\times H_1(M(K);\Q[t^{\pm 1}]) \to \Q(t)/\Q[t^{\pm 1}].
\]
For a $\Q[t^{\pm 1}]$-module $P$ of $H_1(M(K);\Q[t^{\pm 1}])$, we define
\[
P^\perp:= \{x\in H_1(M(K);\Q[t^{\pm 1}])\,\mid\, B\ell(x,y)=0 \mbox{ for all }y\in P\}.
\]
If $P=P^\perp$, we say that $P$ is \emph{self-annihilating} with respect to the rational Blanchfield form. 

Letting the group $\Z=H_1(M(K))=\langle t\rangle$ act on $H_1(M(K);\Q[t^{\pm 1}])$ via the action of $t$, we obtain the semi-direct product $H_1(M(K);\Q[t^{\pm 1}])\rtimes \Z$. Then, each element $x\in H_1(M(K);\Q[t^{\pm 1}])$ induces a homomorphism
\[
\phi_x\colon\pi_1M(K) \to H_1(M(K);\Q[t^{\pm 1}])\rtimes \Z \to \Q(t)/\Q[t^{\pm 1}]\rtimes \Z
\]
such that $\phi_x(y) = (B\ell(x, y\mu^{-\epsilon(y)}), \epsilon(y))$ where $\mu$ is the meridian of $K$ and $\epsilon\colon \pi_1M(K) \to \Z=H_1(M(K))$ is the abelianization (see \cite[Section~3]{Cochran-Orr-Teichner:1999-1}).

We say that $K$ has \emph{vanishing metabelian $\rhot$-variants} if there exists a self-annihilating submodule $P$ with respect to the rational Blanchfield form such that $\rhot(M(K), \phi_x) = 0$ for all $x\in P$. We have the following theorem.
\begin{theorem}\label{theorem:vanishing-metabelian-rho-invariants}\cite[Theorem 4.6]{Cochran-Orr-Teichner:1999-1} 
A slice knot has vanishing metabelian $\rhot$-invariants. 
\end{theorem}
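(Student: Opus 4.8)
The plan is to use the standard Cochran--Orr--Teichner strategy: produce a slice disk exterior, show that the relevant metabelian representation extends over it, and then invoke the vanishing result for $\rhot$-invariants associated to such representations. Concretely, suppose $K$ is slice and let $D$ be a slice disk in $B^4$ with exterior $X := B^4 \setminus \nu(D)$. Then $\partial X = M(K)$, and a classical argument shows $H_1(X) \cong \Z$ generated by the meridian $\mu$, together with $H_2(X) = 0$ and (with $\Q$-coefficients) a half-lives-half-dies statement for the rational Blanchfield form. The first step is therefore to recall that the kernel $P_0$ of the inclusion-induced map $H_1(M(K);\Q[t^{\pm 1}]) \to H_1(X;\Q[t^{\pm 1}])$ is a self-annihilating submodule with respect to $B\ell$; this is exactly the metabolizer produced by the slice disk, and it is the source of the submodule $P$ in the statement.

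Next I would verify that for each $x \in P_0$ the representation $\phi_x\colon \pi_1 M(K) \to \Q(t)/\Q[t^{\pm 1}] \rtimes \Z$ extends over $\pi_1 X$. The point is that $\phi_x$ is built from the Blanchfield pairing $B\ell(x,\cdot)$ composed with the abelianization $\epsilon$, and when $x$ lies in the kernel $P_0$ the pairing $B\ell(x,\cdot)$ extends to a well-defined map on $H_1(X;\Q[t^{\pm 1}])$ by the duality/self-annihilation property; combined with the extension of $\epsilon$ over $H_1(X)\cong\Z$, this produces an extension $\tilde\phi_x\colon \pi_1 X \to \Q(t)/\Q[t^{\pm 1}] \rtimes \Z$. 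Thus taking $P := P_0$, every $x\in P$ yields a representation that bounds in the sense required by the vanishing theorem.

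The final step is to conclude $\rhot(M(K),\phi_x) = 0$ for all $x\in P$. Since $\Q(t)/\Q[t^{\pm 1}] \rtimes \Z$ is metabelian and poly-(torsion-free-abelian), the target group is amenable and lies in the appropriate Strebel class, so one can apply the vanishing mechanism directly: computing $\rhot(M(K),\phi_x)$ as the $\Lt$-signature defect of $X$ and using that $H_2(X;\N\Gamma)$ carries a nonsingular intersection form on a half-dimensional summand forces both $\lsign_\Gamma(X)$ and $\sign(X)$ to vanish. I expect the main obstacle to be the extension step: one must check carefully that the Blanchfield-form component of $\phi_x$, not merely $\epsilon$, extends over the disk exterior, and this is precisely where the self-annihilating property of $P_0$ is used and where the coefficient system $\Q(t)/\Q[t^{\pm 1}]$ (rather than $\Z$-coefficients) is essential. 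Since this is a known theorem of \cite{Cochran-Orr-Teichner:1999-1}, in practice I would simply cite their Theorem~4.6, but the sketch above indicates the structure of the argument it rests on.
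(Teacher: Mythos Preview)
The paper gives no proof of this statement; it is stated purely as a citation to \cite[Theorem~4.6]{Cochran-Orr-Teichner:1999-1}, exactly as you anticipate in your final sentence. So there is nothing to compare at the level of the paper itself, and your proposal to simply cite the result matches the paper.

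Your sketch of the underlying Cochran--Orr--Teichner argument is structurally correct: the self-annihilating submodule $P_0$ is indeed the kernel of the inclusion-induced map on rational Alexander modules, and the extension of $\phi_x$ over the slice disk exterior for $x\in P_0$ is the crucial step. Two minor points of imprecision. First, the original argument in \cite{Cochran-Orr-Teichner:1999-1} uses the PTFA property of $\Q(t)/\Q[t^{\pm1}]\rtimes\Z$ directly (the amenable/Strebel framework is Cha--Orr's later generalization, which the present paper uses elsewhere but not for this theorem). Second, the vanishing of $\lsign_\Gamma(X)$ for the slice disk exterior $X$ is not really a ``half-dimensional summand'' argument: since $H_2(X)=0$ and $\Q\Gamma$ is an Ore domain for PTFA $\Gamma$, one gets $H_2(X;\mathcal{K}\Gamma)=0$ for the Ore localization $\mathcal{K}\Gamma$, which forces the $L^2$-intersection form (and hence $\lsign_\Gamma(X)$) to vanish. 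These are refinements rather than gaps; your outline captures the essential logic.
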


\section{Proof of Theorem~\ref{theorem:main-theorem}}\label{section:proof}
\subsubsection*{Construction of the $K_i$}
Let $K$ be an algebraically slice knot with $\Delta_K(t)\ne 1$ which has a Seifert form $V$. Since there exists a slice knot having the same Seifert form as $K$ (for instance, see \cite[Proposition~12.2.1]{Kawauchi:1996-1}), we may assume that $K$ is slice. 

We will construct the desired $K_i$ using (iterated) satellite construction. We briefly explain satellite construction we will use in this paper. Let $\eta_1, \eta_2, \ldots, \eta_m$ be simple closed curves in $S^3\setminus K$ such that the curves $\eta_\ell$ form an unlink in $S^3$. Let $J$ be a knot. Now take the union of $S^3\setminus N(\eta_1)$ and $S^3\setminus N(J)$ along their common boundary $S^1\times S^1$ via an orientation reversing homeomorphism such that a meridian (resp. 0-framed longitude) of $\eta_1$ is identified with a zero-framed longitude (resp. a meridian) of $J$. Iterating this process, for each $\ell=1,2,\ldots, m$, replace the open tubular neighborhood $N(\eta_\ell)$ of $\eta_\ell$ with the exterior of $J$. The resulting ambient space is homeomorphic to $S^3$, and the image of $K$ under this process becomes a new knot in $S^3$, which we denote by $K(\eta_1,\ldots, \eta_m;J)$ or $K(\eta_\ell; J)$ for simplicity. We will construct $K_i$ as $K(\eta_\ell; J_i)$ form some choice of $\eta_\ell$ and $J_i$, where the choice of $\eta_1,\ldots, \eta_m$ will be independent of $i$.   

We choose $\eta_1,\ldots, \eta_m$ for $K_i$ as follows. Let $F$ be a Seifert surface for $K$ with which the Seifert form $V$ is associated. Considering $F$ as a disk with $2g$ bands added, take $\eta_\ell$ to be the curves dual to the bands of $F$ (hence $m=2g$). 

Since $H_1(M(K);\Z_p[t^{\pm 1}])\cong H_1(M(K);\Z[t^{\pm 1}])\otimes \Z_p$ and $\Z_p$ is a field, it easily follows that the $\eta_\ell$ generate $H_1(M(K);\Z_p[t^{\pm 1}])$ for each prime $p$. It is well-known that the $\eta_\ell$ also  generate $H_1(M(K);\Q[t^{\pm 1}])$. This is a key property of the $\eta_\ell$ which we will use later.

We explain how to choose $J_i$. In \cite{Cheeger-Gromov:1985-1}, it was shown that there exists a constant $C_K$ such that $|\rhot(M(K), \phi)|<C_K$ for \emph{every} homomorphism $\phi\colon \pi_1M\to \Gamma$ where $\Gamma$ is a countable group. (One can take an explicit value for $C_K$ as $69713280\cdot c(K)$ where $c(K)$ is the crossing number of $K$ \cite[Theorem~1.9]{Cha:2014-1}.) Now we choose $J_i$ to be the knots in Lemma~\ref{lemma:J_i} below. For a knot $K$, let $a_K$ be the top coefficient of $\Delta_K(t)$ and $\sigma_K$ the Levine-Tristram signature function for $K$.
\begin{lemma}\label{lemma:J_i} For the constants $C_K$ and $a_K$ defined as above, there exists a sequence of knots $J_1, J_2,\ldots$ and a sequence of primes $p_1, p_2, \ldots$ which satisfy the following:
\begin{enumerate}
	\item $\Arf(J_i)=0$ for each $i$ and $a_K<p_1 <p_2 <p_3 < \cdots$,
	\item $\sum_{r=0}^{p_i-1}\sigma_{J_i}(e^{2\pi r\sqrt{-1}/p_i})>p_iC_K$ for all $i$,
	\item $\sum_{r=0}^{p_i-1}\sigma_{J_j}(e^{2\pi r\sqrt{-1}/p_i})=0$ for $j>i$,
	\item $\int_{S^1}\sigma_{J_i}(\omega)\,\, d\omega > C_K$ for all $i$.
\end{enumerate}
\end{lemma}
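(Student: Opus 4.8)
The plan is to construct the knots $J_i$ and primes $p_i$ inductively, building the sequence one term at a time so that each new knot's signature data can be arranged to satisfy conditions (2) and (4) while respecting the orthogonality constraint (3) imposed by all previously chosen primes. The key technical input is that the Levine--Tristram signature function is additive under connected sum, $\sigma_{K_1\#K_2}=\sigma_{K_1}+\sigma_{K_2}$, and that it is controlled by the zeros of the Alexander polynomial: $\sigma_J(\omega)$ is locally constant away from the finitely many roots of $\Delta_J$ on the unit circle, and its jumps occur at those roots. The Arf invariant is likewise additive, so taking connected sums of Arf-zero knots preserves $\Arf=0$.

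First I would fix a supply of building-block knots with prescribed signature behavior. The standard choice is the family $T_k$ of $(2,2k+1)$ torus knots (or suitable twist/cable knots), whose signature functions are explicitly known, whose Alexander polynomials have all roots at prescribed roots of unity, and which have $\Arf=0$ after a parity adjustment. The crucial flexibility is that by scaling (taking many parallel connected copies) one can make the integral $\int_{S^1}\sigma_{J}(\omega)\,d\omega$ as large as desired, and by choosing knots whose Alexander polynomial roots avoid the $p_i$-th roots of unity one can force the finite sums $\sum_{r=0}^{p_i-1}\sigma_{J}(e^{2\pi r\sqrt{-1}/p_i})$ to take controlled values. This is where conditions (2), (3), and (4) are reconciled: I would choose each $J_i$ so that $\Delta_{J_i}$ has no roots at any $p_j$-th root of unity for $j>i$, which makes the $p_j$-sampled sum of $\sigma_{J_i}$ vanish (giving (3)), while arranging the $p_i$-sampled sum and the integral to be large (giving (2) and (4)).

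The inductive step proceeds as follows. Having chosen $J_1,\ldots,J_{i-1}$ and $p_1,\ldots,p_{i-1}$ with $a_K<p_1<\cdots<p_{i-1}$, I would first pick a prime $p_i>p_{i-1}$, then select $J_i$ from the building-block family with enough connected summands that $\int_{S^1}\sigma_{J_i}>C_K$ and $\sum_{r=0}^{p_i-1}\sigma_{J_i}(e^{2\pi r\sqrt{-1}/p_i})>p_iC_K$, while ensuring $\Delta_{J_i}$ has no zeros at $p_j$-th roots of unity for the (not-yet-chosen) later primes---this last point is handled by instead choosing, at stage $j>i$, the prime $p_j$ to avoid the finitely many arguments where $\sigma_{J_i}$ jumps, for all $i<j$ simultaneously. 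Since each $\sigma_{J_i}$ has only finitely many discontinuities and these are at arguments $2\pi s/N$ for integers $s$ and $N$ dividing some fixed order, a prime $p_j$ larger than all relevant denominators automatically samples $\sigma_{J_i}$ only at its continuity points and sums to zero by the symmetry $\sigma_{J}(\bar\omega)=-\sigma_J(\omega)$ together with $\sigma_J(1)=0$.

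\textbf{The main obstacle} I anticipate is the simultaneous satisfaction of the large-sum condition (2) and the vanishing condition (3): condition (2) demands that $\sigma_{J_i}$ sample to something large at the $p_i$-th roots of unity, whereas (3) demands it sample to exactly zero at all later primes $p_j$. Reconciling these requires careful control of \emph{where} the signature jumps of $J_i$ sit relative to the sampling points, and the clean resolution is the observation that sampling a function that is antisymmetric under complex conjugation and continuous at the sampled points over a complete set of nontrivial roots of unity yields zero. Thus the real work is verifying that each $J_i$ can be built so that its Alexander roots lie at $p_i$-th roots of unity (forcing jumps exactly at the stage-$i$ sampling points to generate the large value in (2)) while a sufficiently large prime $p_j$ misses them entirely; this is a matching problem between the cyclotomic arithmetic of the chosen Alexander polynomials and the growth of the prime sequence, which the inductive ``choose $p_j$ large'' strategy precisely dissolves.
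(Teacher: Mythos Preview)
Your proposal contains a genuine error in the mechanism you invoke for condition~(3). You claim the Levine--Tristram signature satisfies $\sigma_J(\bar\omega)=-\sigma_J(\omega)$ and that sampling an ``antisymmetric'' function over a complete set of roots of unity therefore gives zero. This is false: the Hermitian matrix $(1-\bar\omega)V+(1-\omega)V^T$ is the conjugate transpose of $(1-\omega)V+(1-\bar\omega)V^T$, so in fact $\sigma_J(\bar\omega)=\sigma_J(\omega)$. The signature function is \emph{even} under conjugation, not odd. Consequently, even if $p_j$ is chosen so large that all $p_j$-th roots of unity avoid the jumps of $\sigma_{J_i}$, the sum $\sum_{r=0}^{p_j-1}\sigma_{J_i}(e^{2\pi r\sqrt{-1}/p_j})$ has no reason to vanish; for a $(2,2k+1)$ torus knot it is a large nonzero multiple of $-2$. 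Your inductive ``choose $p_j$ large'' strategy therefore does not deliver~(3), and the building blocks you propose (torus or twist knots) have signature functions that are nonzero on large arcs of $S^1$, which is exactly the wrong shape.

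The paper's argument is quite different and sidesteps this entirely. It first fixes \emph{all} the primes $p_1<p_2<\cdots$ at once (any increasing sequence above $a_K$), and then for each $i$ invokes \cite[Lemma~5.6]{Cha:2007-2} to produce a knot $L_i$ whose signature function is a \emph{bump}: positive on small neighborhoods $N(\omega_i)\cup N(\omega_i^{-1})$ of the primitive $p_i$-th root $\omega_i$ and its conjugate, and identically zero outside. These neighborhoods are chosen disjoint from the finitely many points $\omega_j^r$ with $j<i$, so $\sigma_{L_i}$ vanishes at every $p_j$-th root of unity for $j<i$; swapping indices, this is exactly condition~(3). Then $J_i$ is an even connected sum of copies of $L_i$, which forces $\Arf(J_i)=0$ and scales both the $p_i$-sampled sum and $\int_{S^1}\sigma_{J_i}$ above the required thresholds. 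The essential missing ingredient in your approach is a supply of knots with \emph{compactly supported} signature functions; once you have those, no delicate cyclotomic matching or cancellation argument is needed.
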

\begin{proof}
Let $\{p_i\}$ be any increasing sequence of primes bigger than $a_K$. Let $w_i :=e^{2\pi\sqrt{-1}/p_i}$. By \cite[Lemma~5.6]{Cha:2007-2}, for each $i$ there exists a knot $L_i$ and neighborhoods $N(\omega_i)$ and $N(\omega_i^{-1})$ of $\omega_i$ and $\omega_i^{-1}$, respectively, which are disjoint from $\omega_j^r$ for all $j<i$ and all $r\in \Z$ such that $\sigma_{L_i}$ is positive inside $N(\omega_i)\cup N(\omega_i^{-1})$ and 0 outside $N(\omega_i)\cup N(\omega_i^{-1})$. Now for each $i$, the desired knot $J_i$ can be obtained by taking the connected sum of sufficiently many even number of copies of $L_i$.
\end{proof}

Now for each $i$ we define $K_i:=K(\eta_\ell;J_i)$ where $\eta_\ell$ and $J_i$ are defined as above. 

\subsubsection*{Proof of Theorem~\ref{theorem:main-theorem}(1)}
Since we have chosen $\eta_\ell$ in the complement of the Seifert surface $F$ for $K$, for each $i$, the image of $F$ under the satellite construction for $K_i$ becomes a Seifert surface for $K_i$ which has the same Seifert form as $F$. This proves Theorem~\ref{theorem:main-theorem}(1). 

\subsubsection*{Proof of Theorem~\ref{theorem:main-theorem}(3)} 
We prove Theorem~\ref{theorem:main-theorem}(3) before proving Theorem~\ref{theorem:main-theorem}(2). Recalling Theorem~\ref{theorem:vanishing-metabelian-rho-invariants} and the fact that a knot and its inverse have the same Alexander polynomial, we have the following theorem on polynomial splittings of metabelian $\rhot$-invariants.
\begin{theorem}\label{theorem:splitting}\cite[Theorem~3.1]{Kim-Kim:2008-1}
	Suppose two knots $K$ and $J$ have coprime Alexander polynomials. If $K$ does not have vanishing metabelian $\rhot$-invariants, then $K$ is not concordant to $J$.
\end{theorem}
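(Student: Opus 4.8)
The plan is to prove the contrapositive: assuming $K$ is concordant to $J$, I will show that $K$ has vanishing metabelian $\rhot$-invariants. Concordance of $K$ and $J$ means $K\#(-J)$ is slice, so by Theorem~\ref{theorem:vanishing-metabelian-rho-invariants} there is a submodule $P\subseteq H_1(M(K\#(-J));\Q[t^{\pm1}])$ that is self-annihilating for the rational Blanchfield form $B\ell$ and satisfies $\rhot(M(K\#(-J)),\phi_x)=0$ for all $x\in P$. The goal is to extract from $P$ a self-annihilating submodule $P_K\subseteq H_1(M(K);\Q[t^{\pm1}])$ witnessing vanishing metabelian $\rhot$-invariants for $K$.

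The first main step is an algebraic splitting. Since $\Delta_{K\#(-J)}(t)=\Delta_K(t)\cdot\Delta_{-J}(t)$ and $\Delta_{-J}(t)=\Delta_J(t)$ is coprime to $\Delta_K(t)$, the rational Alexander module splits as $H_1(M(K\#(-J));\Q[t^{\pm1}])\cong A_K\oplus A_J$, where $A_K=H_1(M(K);\Q[t^{\pm1}])$ and $A_J=H_1(M(-J);\Q[t^{\pm1}])$ are annihilated by coprime polynomials; in particular $B\ell$ is block diagonal with respect to this decomposition. Using a B\'ezout relation $a\Delta_K+b\Delta_J=1$, multiplication by $b\Delta_J$ and by $a\Delta_K$ realizes the two projections of $A_K\oplus A_J$ onto its summands, and these are $\Q[t^{\pm1}]$-module maps. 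As $P$ is a submodule it is preserved by them, so $P=P_K\oplus P_J$ with $P_K=P\cap A_K$ and $P_J=P\cap A_J$. Block-diagonality of $B\ell$ then gives $P^\perp=P_K^\perp\oplus P_J^\perp$, and $P=P^\perp$ forces $P_K=P_K^\perp$ and $P_J=P_J^\perp$; that is, $P_K$ and $P_J$ are self-annihilating in $A_K$ and $A_J$ respectively.

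The second main step transfers the vanishing of $\rhot$ to $K$ via additivity under connected sum. For $x=(x_K,0)\in P$, the representation $\phi_x$ restricts on $\pi_1 M(K)$ to the metabelian representation $\phi_{x_K}$ of $K$, while on $\pi_1 M(-J)$ it factors through the abelianization $\Z$ because $B\ell(x,\cdot)$ vanishes on $A_J$. The standard cobordism from $M(K)\sqcup M(-J)$ to $M(K\#(-J))$ has vanishing ordinary and $\Lt$-signatures, so
\[
0=\rhot(M(K\#(-J)),\phi_x)=\rhot(M(K),\phi_{x_K})+\rhot(M(-J),\phi_0),
\]
where $\rhot(M(-J),\phi_0)=\int_{S^1}\sigma_{-J}(\omega)\,d\omega$ since $\phi_0$ factors through $\Z$. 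Here coprimality enters a second, essential time: because $K\#(-J)$ is algebraically slice and $\Delta_K(t),\Delta_J(t)$ are coprime, Levine's theorem \cite{Levine:1969-2} shows that $-J$ is algebraically slice, whence $\sigma_{-J}\equiv 0$ and $\int_{S^1}\sigma_{-J}=0$. Therefore $\rhot(M(K),\phi_{x_K})=0$ for every $x_K\in P_K$, so $P_K$ exhibits vanishing metabelian $\rhot$-invariants for $K$, contradicting the hypothesis.

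I expect the algebraic splitting $P=P_K\oplus P_J$ into self-annihilating factors to be the main obstacle, since it is where coprimality of the Alexander polynomials is used to force a self-annihilating submodule of the orthogonal sum to respect the decomposition. By comparison, the additivity of $\rhot$ is standard once the restrictions of $\phi_x$ to the two summands are identified, and the potentially troublesome abelian correction term $\int_{S^1}\sigma_{-J}$ is disposed of cleanly by Levine's theorem.
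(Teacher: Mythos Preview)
The paper does not prove this theorem; it merely quotes it from \cite[Theorem~3.1]{Kim-Kim:2008-1}. So there is no in-paper proof to compare against. That said, your argument is correct and is essentially the proof given in the cited reference: split the self-annihilating submodule along the coprime decomposition of the rational Alexander module, use the standard cobordism to obtain additivity of $\rhot$ (exactly as in the paper's proof of Theorem~\ref{theorem:main-theorem}(3)), and kill the abelian correction term on the $-J$ side.

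One remark on that last step. Your appeal to Levine's theorem to conclude that $-J$ is algebraically slice, and hence $\int_{S^1}\sigma_{-J}=0$, is valid. A slightly more direct route, which avoids invoking algebraic sliceness of $-J$ altogether, is to observe that $\phi_0$ on $M(-J)$ is the restriction of $\phi_{x=0}$ on $M(K\#(-J))$, and that the latter already factors through $\Z$; since $K\#(-J)$ is slice its integrated signature vanishes, and by additivity of Levine--Tristram signatures $\int_{S^1}\sigma_K+\int_{S^1}\sigma_{-J}=0$. Either way the term disappears, so this does not affect correctness. Your identification of the algebraic splitting $P=P_K\oplus P_J$ as the crux of the argument is accurate.
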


Note that since $\Delta_{nK_i}(t) = (\Delta_K(t))^{|n|}$, the Alexander polynomial of a knot is coprime to that of $K$ if and only if it is coprime to that of $nK_i$. Therefore, by Theorem~\ref{theorem:splitting}, to prove Theorem~\ref{theorem:main-theorem}(3) it suffices to show that $nK_i$ does not have vanishing metabelian $\rhot$-invariants for each $n$ and $i$. Fix $n$ and $i$. By taking the inverse of $K$ if necessary, we may assume $n>0$. Suppose to the contrary that $nK_i$ has vanishing metabelian $\rhot$-invariants. Then, there exists a self-annihilating submodule $P$ of $H_1(M(nK_i);\Q[t^{\pm 1}])$ such that $\rhot(M(nK_i), \phi_x)=0$ for all $x\in P$. It is well-known that since $P$ is a self-annihilating submodule, 
\[
\rank_\Q P = \frac12 \rank_\Q H_1(M(nK_i);\Q[t^{\pm 1}]) = \frac12 \deg \Delta_{nK_i}(t).
\]
Since $\Delta_K(t)\ne 1$, it follows that $P\ne 0$. 

Fix $x\in P$ such that $x\ne 0$. In particular, $\rhot(M(nK_i), \phi_x) = 0$ where $\phi_x$ is the homomorphism $\pi_1M(nK_i)\to \Q(t)/\Q[t^{\pm 1}]\rtimes \Z$ induce from $x$ as defined in Section~\ref{section:preliminaries}. We will show that this will lead us to a contradiction.

To compute $\rhot(M(nK_i), \phi_x)$, we construct a cobordism $C$ such that 
\[
\partial C  = M(nK_i)\coprod (-\coprod^nM(K_i))
\]
as follows. Let $C$ be the standard cobordism between $M(nK_i)$ and $\coprod^nM(K_i)$ as in \cite[p.113]{Cochran-Orr-Teichner:2002-1}. Briefly speaking, $C$ is obtained from $\coprod^nM(K_i)\times [0,1]$ by attaching $n-1$ 1-handles whose resulting top boundary is the 0-framed surgery on the split link of $n$ copies of $K_i$, and then attaching $n-1$ 2-handles whose attaching circles are zero-framed circles represented by $\mu_j\mu_{j+1}^{-1}$, respectively, for $1\le j\le n-1$ where $\mu_j$ is the meridian of the $j$th copy of $K_i$. 

Then, one can see that $\pi_1C\cong \pi_1(M(nK_i))/\langle \ell_1,\ldots, \ell_n\rangle$ where $\langle \cdots\rangle$ denotes the normal subgroup generated by $\cdots$ and each $\ell_j$ is the 0-framed longitude of the $j$th copy of $K_i$ (for example, see \cite[Lemma~3.1 and p.810]{Kim-Kim:2014-1}). For simplicity, let $G:=\Q(t)/\Q[t^{\pm 1}]\rtimes \Z$. Since $\ell_j\in \pi_1(M(nK_i))^{(2)}$ for each $j$ and $G^{(2)}= 0$, it follows that $\phi_x(\ell_j) = 0$ for all $j$. Therefore, $\phi_x$ extends to $\pi_1C\to G$, which is also denoted by $\phi_x$.
 
 For each $j$, let $\phi_x^j\colon \pi_1(M(K_i))\to G$ be the restriction of $\phi_x\colon \pi_1C\to G$ to the $j$th copy of $M(K_i)$ in the bottom boundary of $C$. Let $B\ell$ and $B\ell_i$ denote the rational Blanchfield forms of $nK_i$ and $K_i$, respectively. Then, $H_1(M(nK_i);\Q[t^{\pm 1}])\cong \oplus^n H_1(M(K_i);\Q[t^{\pm 1}])$ and $B\ell\cong \oplus^nB\ell_i$. Therefore, we can write $x=(x_1,x_2,\ldots, x_n)$ for some $x_j\in H_1(M(K_i);\Q[t^{\pm 1}])$ for $1\le j\le n$, and then for each $y=(y_1,\ldots, y_n)\in H_1(M(nK_i);\Q[t^{\pm 1}])$ we have $B\ell(x,y) = \sum_{j=1}^n B\ell_i(x_j,y_j)$. Also, we can identify $y_j\in H_1(M(K_i);\Q[t{\pm 1}])$ with $y=(y_1,y_2,\ldots, y_n)\in H_1(M(nK_i);\Q[t^{\pm 1}])$ such that $y_i=0$ for $i\ne j$, and we obtain $B\ell(x,y_j) = B\ell(x,y)= \sum_{j=1}^n B\ell_i(x_j,y_j) = B\ell_i(x_j,y_j)$. Therefore, for each $j$ one can deduce that $\phi_x^j = \phi_{x_j}$, the homomorphism induced from $x_j$. 
 
Now, from the definition of $\rhot$-invariants, we obtain
\[
\lsign_G(C) - \sign(C) = \rhot(M(nK_i),\phi_x) - \sum_{j=1}^n \rhot(M(K_i), \phi_{x_j}).
\]
Using Mayer Vietoris sequences, we can show $H_2(C)\cong H_2(\partial_+C)$ where $\partial_+C:= M(nK_i))$, and hence 
\[
\Coker\{H_2(\partial C_+)\to H_2(C)\}=0.
\]
Therefore, $\sign(C)=0$, and we also obtain $\lsign_G(C)=0$ by \cite[Theorem 6.6]{Cha-Orr:2009-1} (or see the proof of\cite[Lemma~4.2]{Cochran-Orr-Teichner:2002-1}). Therefore, we have
\[
\rhot(M(nK_i),\phi_x) = \sum_{j=1}^n \rhot(M(K_i), \phi_{x_j}).
\]
Since $\rhot(M(nK_i),\phi_x)=0$ by our choice of $x$,  we obtain
\begin{equation}\label{equation:rho-invariant}
\sum_{j=1}^n \rhot(M(K_i), \phi_{x_j})=0.
\end{equation}

We compute $\rhot(M(K_i),\phi_{x_j})$ for each $j$. If $x_j=0$,  then $\rhot(M(K_i),\phi_{x_j})=0$. For, in this case the $\phi_{x_j}$ maps onto $\Z$, and therefore $\rhot(M(K_i),\phi_{x_j})=\int_{S^1}\sigma_{K_i}(\omega)\,d\omega$ (see (2.3) on p.108 and Lemma~5.3 in \cite{Cochran-Orr-Teichner:2002-1}). Since $K_i$ has the same Seifert form $V$ as the slice knot $K$, we have $\int_{S^1}\sigma_{K_i}(\omega)\,d\omega=0$. 

Suppose $x_j\ne 0$. Recall that $K_i= K(\eta_\ell;J_i) = K(\eta_1,\ldots, \eta_m; J_i^1,\ldots, J_i^m)$ where $J_i^\ell$ is the $\ell$th copy of $J_i$ for each $\ell=1,2,\ldots, m$. Since each longitude $\ell_j\in \pi_1(M(K_i))^{(2)}$, the homomorphism $\phi_{x_j}$ uniquely extends to $\pi_1M(K)\to G$ and $\pi_1M(J_i^\ell)\to G$ for $\ell=1,2,\ldots, m$, which we denote by $\phi_j$ and $\phi_j^\ell$, respectively (see \cite[p.1429]{Cochran-Harvey-Leidy:2009-1}). Furthermore, since the meridian of $J_i^\ell$ is identified with the longitude of $\eta_\ell\in \pi_1(M(K))^{(1)}$, the homomorphism $\phi_j^\ell$ maps into $G^{(1)}=\Q(t)/\Q[t^{\pm 1}]$, which is an abelian group. Therefore, we have the following lemma which immediately follows from \cite[Lemma~2.3]{Cochran-Harvey-Leidy:2009-1}. For convenience, let us identify $\eta_\ell$ in $M(K)$ with its image in $M(K_i)$. Note that  $\phi_j(\eta_\ell) = \phi_{x_j}(\eta_\ell)$.
\begin{lemma}\label{lemma:computation-of-rho-invariant}\cite[Lemma~2.3]{Cochran-Harvey-Leidy:2009-1}
In the above setting, we have
\[
\rhot(M(K_i),\phi_{x_j}) = \rhot(M(K),\phi_j) + \sum_{\ell=1}^m\rhot(M(J_i^\ell),\phi_j^\ell),
\]
where
\[
	\rhot(M(J_i^\ell),\phi_j^\ell)= 
	\begin{cases}
	0 & \mbox{if } \phi_{x_j}(\eta_\ell)= 0,\\[1ex]
	\int_{S^1} \sigma_{J_i}(\omega)\,d\omega & \mbox{if } \phi_{x_j}(\eta_\ell)\ne 0.
	
	\end{cases}
\]
\end{lemma}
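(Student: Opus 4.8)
The plan is to split $\rhot(M(K_i),\phi_{x_j})$ into a contribution from $M(K)$ and one contribution from each companion $M(J_i^\ell)$ by means of a standard cobordism, and then to evaluate each companion term by showing that the representation restricted there is abelian and reducing it to the $\Z$-case. First I would record the additivity formula, which is the cited \cite[Lemma~2.3]{Cochran-Harvey-Leidy:2009-1}. Realize $M(K_i)$ from $M(K)$ by cutting along the tori $\partial N(\eta_\ell)$ and regluing the companion exteriors $E(J_i^\ell)$, matching the meridian of $J_i^\ell$ to the longitude of $\eta_\ell$. The infection cobordism $\mathcal{E}$ of \cite{Cochran-Harvey-Leidy:2009-1} has
\[
\partial \mathcal{E} = M(K_i) \sqcup -M(K)\sqcup \bigsqcup_{\ell=1}^m -M(J_i^\ell),
\]
and is assembled from products of $3$-manifolds with an interval glued along thickened tori. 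The representations on the product blocks agree on the gluing tori by the meridian--longitude matching of the infection, so $\phi_{x_j}$ extends over $\mathcal{E}$ restricting to $\phi_j$ and the $\phi_j^\ell$ on the lower boundary. Since each block is a product, its ordinary and $L^2$-signatures vanish, and additivity of signatures under gluing along $3$-manifolds then gives $\sign(\mathcal{E})=0$ and $\lsign_G(\mathcal{E})=0$; by the definition of $\rhot$ as signature defect this yields
\[
\rhot(M(K_i),\phi_{x_j}) = \rhot(M(K),\phi_j) + \sum_{\ell=1}^m \rhot(M(J_i^\ell),\phi_j^\ell).
\]

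Next I would identify the restricted representations. The meridian $\mu_{J_i^\ell}$ is glued to the longitude of $\eta_\ell$, which is freely homotopic to $\eta_\ell$ inside $N(\eta_\ell)$; since both curves lie in $\pi_1(M(K))^{(1)}$ and $\phi_{x_j}$ carries this subgroup into the abelian group $G^{(1)}=\Q(t)/\Q[t^{\pm 1}]$, freely homotopic loops have equal image, so $\phi_j^\ell(\mu_{J_i^\ell}) = \phi_{x_j}(\eta_\ell)$. Because $\pi_1 M(J_i^\ell)$ is normally generated by its meridian and $G^{(1)}$ is abelian, $\phi_j^\ell$ factors through $H_1(M(J_i^\ell))\cong\Z$, sending the generator to $\phi_{x_j}(\eta_\ell)$.

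Now the two cases. If $\phi_{x_j}(\eta_\ell)=0$, then $\phi_j^\ell$ is trivial and its signature defect vanishes, so $\rhot(M(J_i^\ell),\phi_j^\ell)=0$. If $\phi_{x_j}(\eta_\ell)\neq0$, then since $\Q(t)/\Q[t^{\pm 1}]$ is a $\Q$-vector space every nonzero element has infinite additive order, so the induced map $\Z\to G^{(1)}$ is injective. By invariance of $L^2$-signatures under inclusion of groups, $\rhot(M(J_i^\ell),\phi_j^\ell)$ equals the $\rhot$-invariant of the abelianization $\pi_1M(J_i^\ell)\to\Z$, which by the standard reduction over $\N(\Z)\cong L^\infty(S^1)$ (see (2.3) on p.108 and Lemma~5.3 in \cite{Cochran-Orr-Teichner:2002-1}, as already used for the case $x_j=0$) equals $\int_{S^1}\sigma_{J_i}(\omega)\,d\omega$. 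Substituting these values into the additivity formula gives the claim.

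The main obstacle is the vanishing of the $L^2$-signature $\lsign_G(\mathcal{E})=0$ of the infection cobordism: although each building block is a product, controlling $H_2(\mathcal{E};\N G)$ and showing the equivariant intersection form there contributes nothing is the genuine technical heart of \cite[Lemma~2.3]{Cochran-Harvey-Leidy:2009-1}. By contrast, the reduction of the companion contributions to the $\Z$-case and the Levine--Tristram integral is routine once one has verified that each $\phi_j^\ell$ is abelian with its meridian recording $\phi_{x_j}(\eta_\ell)$.
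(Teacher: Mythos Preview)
Your proposal is correct and follows essentially the same approach as the paper: the paper does not give an independent proof but simply states that the lemma ``immediately follows from \cite[Lemma~2.3]{Cochran-Harvey-Leidy:2009-1}'' after observing (in the paragraph preceding the lemma) that $\phi_{x_j}$ extends over the infection cobordism and that each $\phi_j^\ell$ lands in the abelian subgroup $G^{(1)}=\Q(t)/\Q[t^{\pm 1}]$. Your sketch of the infection cobordism, the extension of the representation, the identification $\phi_j^\ell(\mu_{J_i^\ell})=\phi_{x_j}(\eta_\ell)$, and the reduction of the companion terms to the abelianization $\pi_1 M(J_i^\ell)\to\Z$ (yielding $0$ or $\int_{S^1}\sigma_{J_i}(\omega)\,d\omega$ according to whether $\phi_{x_j}(\eta_\ell)$ vanishes) is exactly the content of the cited lemma, so there is nothing to add.
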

Since the $\eta_\ell$ $(1\le \ell\le m)$ generate $H_1(M(K_i);\Q[t^{\pm 1}])\cong H_1(M(K);\Q[t^{\pm 1}])$ and the rational Blanchfield form $B\ell_i$ is nonsingular, there exists at least one $\ell$ such that $B\ell_i(x_j,\eta_\ell)\ne 0$ and hence $\phi_{x_j}(\eta_\ell)\ne 0$. Since $\int_{S^1} \sigma_{J_i}(\omega)\,d\omega>0$ by Lemma~\ref{lemma:J_i}(4), from Lemma~\ref{lemma:computation-of-rho-invariant} we deduce that $\rhot(M(K_i),\phi_{x_j})\ge -C_K +\int_{S^1} \sigma_{J_i}(\omega)\,d\omega$. 

Summarizing the computations, we have
\[
\rhot(M(K_i),\phi_{x_j})
\begin{cases}
=0 & \mbox{if } x_j= 0,\\[1ex] 
 \ge -C_K +\int_{S^1} \sigma_{J_i}(\omega)\,d\omega & \mbox{if } x_j\ne 0.
\end{cases}
\]

Let $d$ be the number of $j$ such that $x_j\ne 0$. Now we obtain that
\[
\sum_{j=1}^n \rhot(M(K_i), \phi_{x_j}) \ge d \left(-C_K + \int_{S^1} \sigma_{J_i}(\omega)\,d\omega\right), 
\]
Since $x\ne 0$, we have $d> 0$. By Lemma~\ref{lemma:J_i}(4), it follows that 
\[
\sum_{j=1}^n \rhot(M(K_i), \phi_{x_j}) > 0,
\]
which contradicts Equation~(\ref{equation:rho-invariant}). This proves Theorem~\ref{theorem:main-theorem}(3). 

\subsubsection*{Proof of Theorem~\ref{theorem:main-theorem}(2)}
We show that $K_i$ are linearly independent in $\cC$, namely, no nontrivial linear combination of $K_i$ are slice. 
This can be easily shown by following the arguments in the proof of \cite[Theorem~4.2]{Kim:2016-1}. Moreover, a proof of Theorem~\ref{theorem:main-theorem}(2) is easier than that of \cite[Theorem~4.2]{Kim:2016-1} in the sense that it does not need the technicalities used in the proof of \cite[Theorem~4.2]{Kim:2016-1} such as modules over noncommutative rings and the notion of algebraic $n$-solutions. For the reader's convenience, we adapt the proof of \cite[Theorem~4.2]{Kim:2016-1} to our case, and give a proof of Theorem~\ref{theorem:main-theorem}(2) below.

Suppose to the contrary that $L:=\#_i a_iK_i$ $(a_i\in \Z)$, a nontrivial connected sum of finitely many copies of $\pm K_i$, is slice. We may assume $a_1\ne 0$ by reindexing, and by taking the inverse of $L$ if necessary we may assume further that $a_1>0$. We construct a 4-manifold $W$ by stacking up the following building blocks $V$, $C$, and $V_i$. For a 4-manifold $X$ and a homomorphism $\phi\colon \pi_1X\to \Gamma$ where $\Gamma$ is a group, for simplicity let $S_\Gamma(X):=\lsign_\Gamma(X)-\sign(X)$.
\begin{enumerate}
	\item Let $V$ be the exterior of a slice disk for $L$ in $D^4$. Then, $\partial V = M(L)$.
	\item Let $C$ be the standard cobordism between $M(L)$ and $\coprod a_iM(K_i)$ as constructed in the proof of Theorem~\ref{theorem:main-theorem}(3). Turning $C$ upside down, we may assume $\partial C = (\coprod a_iM(K_i))\coprod (-M(L))$. 	
	\item For each $i$, let $V_i$ be the 4-manifold with $\partial V_i= M(K_i)$ given by \cite[Lemma~4.1(1)]{Kim:2016-1} satisfying the following: suppose $\phi\colon \pi_1V_i\to \Gamma$ is a homomorphism where $\Gamma$ is an amenable group lying in Strebel's class $D(R)$ for some ring $R$. Let $d_\ell$ be the order of $\phi(\eta_\ell)$ in $\Gamma$ and let $\phi_\ell\colon \pi_1M(J_i)\to \Z_{d_\ell}$ be an epimorphism sending the meridian of $J_i$ to $1\in \Z_{d_\ell}$ (where $\Z_\infty:=\Z$).  Then, $S_\Gamma(V_i)=\sum_{\ell=1}^m\rhot(M(J_i),\phi_\ell)$. 
	\item Let $U$ be the 4-manifold with $\partial U=M(K)\coprod (-M(K_1))$ which is given by \cite[Lemma~4.1(2)]{Kim:2016-1} satisfying the following: suppose $\phi\colon \pi_1U \to \Gamma$ is a homomorphism where $\Gamma$ is a group as in (3). Let $d_\ell$ and  $\phi_\ell\colon \pi_1M(J_1)\to \Z_{d_\ell}$ be as in (3). Then, $S_\Gamma(U)=-\sum_{\ell=1}^m\rhot(M(J_1),\phi_\ell)$. 
\end{enumerate}
Let $b_1:=a_1-1$ and $b_i=|a_i|$ for $i\ge 2$. For each $i\ge 1$, let $V_i^r$ be a copy of $-V_i$ for $1\le r\le b_i$.
Now we define $W$ as follows:
\[
W:=V\bigcup_{\partial C_-} C\bigcup_{\partial C_+} \left(U\coprod \left(\coprod_i\coprod_{r=1}^{b_i}V_i^r\right)\right)
\]
where $\partial C_- := M(L)$ and $\partial C_+:=\coprod a_iM(K_i)$. See Figure~\ref{figure:Cobordism_W}. Note that $\partial W = M(K)$. 

\begin{figure}[H]
	\begin{tikzpicture}[x=1bp,y=1bp]
	\small
	\node [anchor=south west, inner sep=0mm] {\includegraphics[scale=0.5]{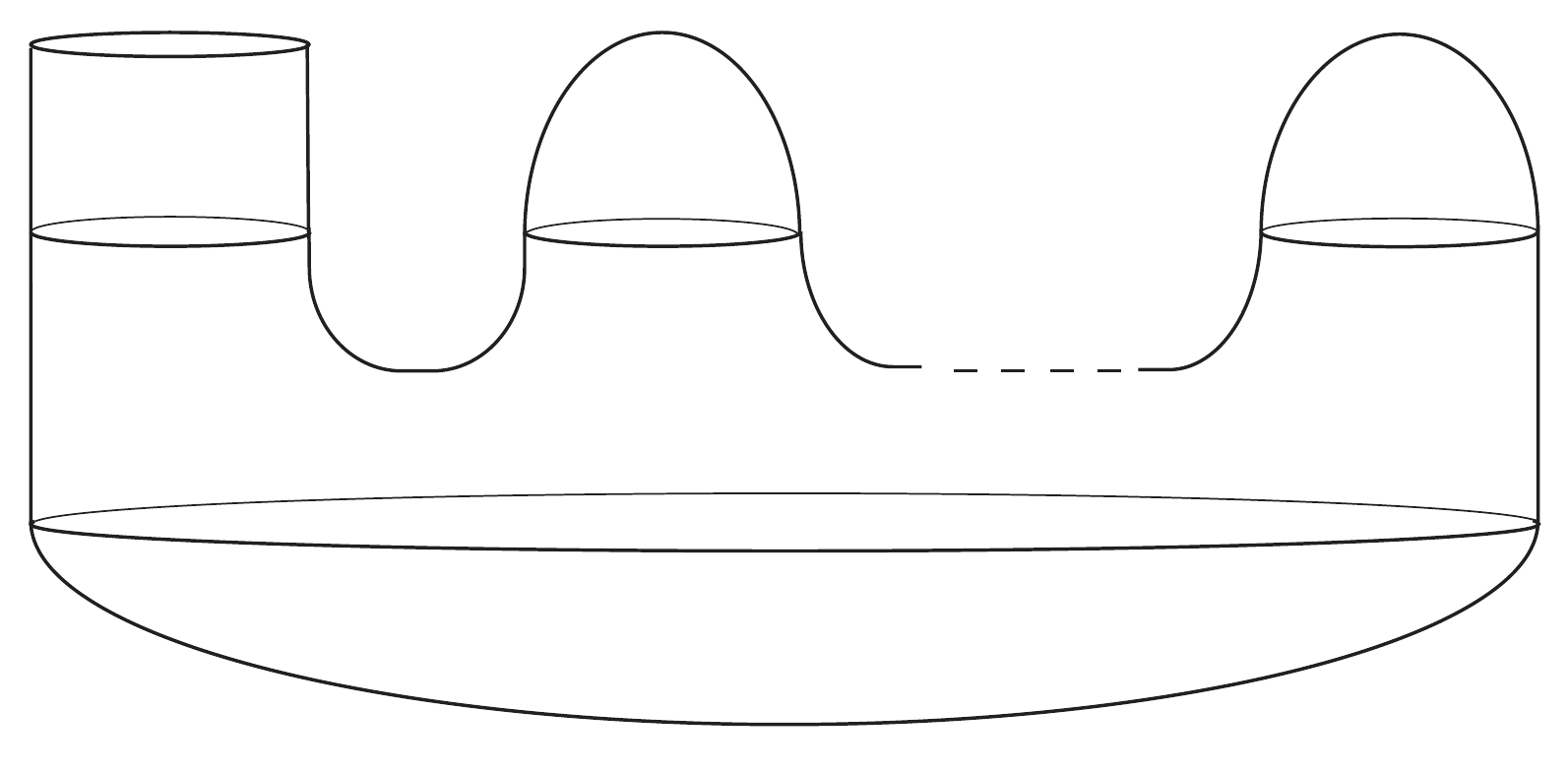}};
	\node at (120,20) {$V$};
	\node at (120,50) {$C$};
	\node at (25,90) {$U$};
	\node at (100,90) {$V_i^r$};
	\node at (208,90) {$V_i^r$};
	\node [left] at (5,35) {$M(L)$};
	\node [left] at (5,79) {$M(K_1)$};
	\node [left] at (5,105) {$M(K)$};
	\node [right] at (117,79) {$M(K_i)$};
	\node [right] at (225,79) {$M(K_i)$};
	\end{tikzpicture}
	\caption{Cobordism $W$}
	\label{figure:Cobordism_W}
\end{figure}

Let $\Gamma:=\pi_1W/(\pi_1W)^{(2)}_{p_1}$ as defined in Section~\ref{section:preliminaries} and $\phi\colon \pi_1W\to \Gamma$ the projection. By Lemma~\ref{lemma:amenable group}, the group $\Gamma$ is amenable and lies in Strebel's class $D(\Z_{p_1})$. By abuse of notation, let $\phi$ also denote the restriction of $\phi$ to subspaces of $W$. 

From the definition of $\rhot$-invariants given in Section~\ref{section:preliminaries}, we have $\rhot(M(K), \phi) = S_\Gamma(W)$. On the other hand, by Novikov additivity we have
\[
S_\Gamma(W) = S_\Gamma(V) + S_\Gamma(C) + S_\Gamma(U) + \sum_i\sum_{r=1}^{b_i} S_\Gamma(V_i^r).
\]
We compute each term of the right-hand side of the above equation.
\begin{enumerate}
	\item $S_\Gamma(V)=0$ by Theorem~\ref{theorem:obstruction} since $V$ is a slice disk exterior. 
	\item $S_\Gamma(C)=0$: since $H_2(C)\cong H_2(\partial_- C)$, it follows that $\Coker\{H_2(\partial_- C)\to H_2(C)\} = 0$. Now $\sign(C) = \lsign_\Gamma(C)=0$  as we have seen in the proof of Theorem~\ref{theorem:main-theorem}(3).
	\item Let $i>1$ and  $\epsilon_i:=-a_i/|a_i|$ Then, $S_\Gamma(V_i^r) = \epsilon_i\cdot \sum_{\ell=1}^m\rhot(M(J_i),\phi_\ell)$. Since $\eta_\ell\in \pi_1(M_K)^{(1)}$, we have $\phi(\eta_\ell)\in \Gamma^{(1)} = \pi_1W^{(1)}/(\pi_1W)^{(2)}_{p_1}$. Since $\Gamma^{(1)}$ injects into $(\pi_1W^{(1)}/\pi_1W^{(2)})\otimes \Z_{p_1}$, which is a $\Z_{p_1}$-vector space, we obtain that  $d_\ell=0\mbox{ or }p_1$. 
	
	If $d_\ell=0$, then $\phi_\ell$ is the trivial map and $\rhot(M(J_i),\phi_\ell) = 0$ by (2.5) in \cite[p.108]{Cochran-Orr-Teichner:2002-1}
	
	If $d_\ell=p_1$, then $\phi_\ell$ is a surjection to $\Z_{p_1}$, and by \cite[Lemma~8.7]{Cha-Orr:2009-1} and Lemma~\ref{lemma:J_i}(3), we obtain $\rhot(M(J_i),\phi_\ell) = \frac{1}{p_1} \sum_{r=0}^{p_1-1}\sigma_{J_i}(e^{2\pi r\sqrt{-1}/p_1}) =0$.
	
	Similarly, $S_\Gamma(V_1^r) = -\rhot(M(J_1),\phi_\ell) = 0\mbox{ or } -\frac{1}{p_1} \sum_{r=0}^{p_1-1}\sigma_{J_1}(e^{2\pi r\sqrt{-1}/p_1})$.
	
	Therefore, $\sum_i\sum_{r=1}^{b_i} S_\Gamma(V_i^r)\le  0$.
	\item $S_\Gamma(U)=-\sum_{\ell=1}^m\rhot(M(J_1),\phi_\ell)$, and similarly as in (3) above, $\rhot(M(J_1),\phi_\ell)=0$ if $d_\ell=0$ and $\frac{1}{p_1} \sum_{r=0}^{p_1-1}\sigma_{J_1}(e^{2\pi r\sqrt{-1}/p_1})$ if $d_\ell=p_1$. By Lemma~\ref{lemma:nontrivial} below, we can conclude that 
	$S_\Gamma(U)\le -\frac{1}{p_1} \sum_{r=0}^{p_1-1}\sigma_{J_1}(e^{2\pi r\sqrt{-1}/p_1})$.
\end{enumerate}
	\begin{lemma}\label{lemma:nontrivial}
		In (4) above, $d_\ell=p_1$ for some $\ell$.
	\end{lemma}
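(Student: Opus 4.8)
The plan is to argue by contradiction: assume $d_\ell = 0$ for every $\ell$, equivalently $\phi(\eta_\ell)=0$ in $\Gamma$ for all $\ell$, and deduce that the entire Alexander module of $K$ dies in $W$, contradicting $\Delta_K(t)\neq 1$. First I would pin down the target of the relevant map. Since each $\eta_\ell\in \pi_1(M(K))^{(1)}$, we have $\phi(\eta_\ell)\in \Gamma^{(1)}=(\pi_1W)^{(1)}/(\pi_1W)^{(2)}_{p_1}$, which (exactly as noted in (3) above) injects into $\big((\pi_1W)^{(1)}/(\pi_1W)^{(2)}\big)\otimes\Z_{p_1}$. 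As $H_1(W)\cong\Z$ (which is already needed to apply Lemma~\ref{lemma:amenable group}), the universal abelian cover of $W$ is its infinite cyclic cover, so $(\pi_1W)^{(1)}/(\pi_1W)^{(2)}\cong H_1(W;\Z[t^{\pm1}])$ as $\Z[t^{\pm1}]$-modules; since that cover is connected, the universal coefficient theorem gives $H_1(W;\Z[t^{\pm1}])\otimes\Z_{p_1}\cong H_1(W;\Z_{p_1}[t^{\pm1}])$. Thus $\phi(\eta_\ell)$ is detected by the image of $\eta_\ell$ under the inclusion-induced map
\[
j\colon H_1(M(K);\Z_{p_1}[t^{\pm1}])\to H_1(W;\Z_{p_1}[t^{\pm1}]),
\]
and $\phi(\eta_\ell)=0$ for all $\ell$ would force $j=0$, because the $\eta_\ell$ generate $H_1(M(K);\Z_{p_1}[t^{\pm1}])$ as a $\Z_{p_1}[t^{\pm1}]$-module.

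Next I would rule out $j=0$ by a half-lives--half-dies argument over $\Z_{p_1}[t^{\pm1}]$, as in the proof of \cite[Theorem~4.2]{Kim:2016-1}. Because $H_1(W)\cong\Z$, we have $H_1(W;\Z_{p_1})\cong\Z_{p_1}$ and the inclusion $M(K)\hookrightarrow W$ induces an isomorphism on $H_1(-;\Z_{p_1})$, both generated by the meridian. The standard duality argument then shows that $\Ker j$ is self-annihilating with respect to the (nonsingular) $\Z_{p_1}[t^{\pm1}]$-coefficient Blanchfield form on $H_1(M(K);\Z_{p_1}[t^{\pm1}])$; in particular $\dim_{\Z_{p_1}}\Ker j\le \tfrac12\dim_{\Z_{p_1}}H_1(M(K);\Z_{p_1}[t^{\pm1}])$.

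Finally I would observe that the module is nonzero. By Lemma~\ref{lemma:J_i}(1) the prime $p_1$ was chosen with $p_1>a_K$, so the leading coefficient $a_K$ of $\Delta_K(t)$ is a unit modulo $p_1$ and $\deg(\Delta_K(t)\bmod p_1)=\deg\Delta_K(t)$; hence $\dim_{\Z_{p_1}}H_1(M(K);\Z_{p_1}[t^{\pm1}])=\deg\Delta_K(t)>0$ since $\Delta_K(t)\neq 1$. Combined with the bound above, $\Ker j$ is a proper submodule, so $j\neq 0$ and some $\phi(\eta_\ell)\neq 0$, i.e.\ $d_\ell=p_1$ for some $\ell$, as claimed. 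The main obstacle is the middle step: one must state the half-lives--half-dies/isotropy property precisely with the torsion coefficient ring $\Z_{p_1}[t^{\pm1}]$ and verify its hypotheses for the stacked manifold $W$ (that $H_1(W;\Z_{p_1})\cong\Z_{p_1}$ is carried by the meridian); the reduction in the first paragraph and the dimension count in the last are formal.
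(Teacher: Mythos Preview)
Your approach matches the paper's: reduce to showing the inclusion-induced map $j\colon H_1(M(K);\Z_{p_1}[t^{\pm1}])\to H_1(W;\Z_{p_1}[t^{\pm1}])$ is nonzero, use a half-rank lower bound on $\operatorname{Im} j$ (equivalently, a half-rank upper bound on $\Ker j$), and finish with the dimension count $\dim_{\Z_{p_1}}H_1(M(K);\Z_{p_1}[t^{\pm1}])=\deg\Delta_K(t)\ge 2$ via $p_1>a_K$.

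The one place where you are vague is exactly where the paper does real work. The assertion that $\Ker j$ is self-annihilating (equivalently, $\rank_{\Z_{p_1}}\operatorname{Im} j\ge\tfrac12\rank_{\Z_{p_1}}H_1(M(K);\Z_{p_1}[t^{\pm1}])$) does \emph{not} follow from the bare facts that $H_1(W;\Z_{p_1})\cong\Z_{p_1}$ and that the meridian carries the isomorphism; with twisted coefficients in $\Z_{p_1}[t^{\pm1}]$ one needs control over the intersection form on $H_2(W;\Z_{p_1}[t^{\pm1}])$. The paper supplies this by invoking \cite[Theorem~5.2]{Kim:2016-1} after checking that $W$ is a \emph{$(1)$-cylinder}: each building block $V$, $C$, $U$, $V_i$ is a $(1)$-cylinder (the $V_i$ are $(1)$-solutions, $V$ is a slice disk exterior, and $C$ has $\Coker\{H_2(\partial C)\to H_2(C)\}=0$), and a union of $(1)$-cylinders along common boundary components is again a $(1)$-cylinder. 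So your ``main obstacle'' is precisely the content the paper fills in; once you replace ``standard duality argument'' with ``$W$ is a $(1)$-cylinder, hence \cite[Theorem~5.2]{Kim:2016-1} applies,'' your sketch becomes the paper's proof.
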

	\begin{proof}	
Let $I$ be the image of the map $i_*\colon H_1(M(K);\Z_{p_1}[t^{\pm 1}])\to H_1(W;\Z_{p_1}[t^{\pm 1}])$ where $i_*$ is induced from the inclusion map. Then, $\rank_{\Z_{p_1}} I \ge \frac12 \rank_{\Z_{p_1}} H_1(M(K);\Z_{p_1}[t^{\pm 1}])$. This can be seen by \cite[Theorem~5.2]{Kim:2016-1} observing that $W$ is a $(1)$-cylinder. This is the only place where we use the notion of $(n)$-cylinders, and since the arguments for showing $W$ is a $(1)$-cylinder is well-known for the experts, we give a brief proof that $W$ is a $(1)$-cylinder below. 
One may refer to \cite{Cochran-Kim:2004-1, Kim:2016-1} for the definition of an $(n)$-cylinder, but we will not use it below. 

By \cite[Lemma~4.1]{Kim:2016-1}, the 4-manifolds $V_i$ and $U$ are obtained as $(1)$-solutions and a $(1)$-cylinder, respectively. Since a $(1)$-solution is a $(1)$-cylinder (see \cite[Proposition~2.3]{Cochran-Kim:2004-1}), $V_i$ are also $(1)$-cylinders.  Since $\Coker\{H_2(\partial C)\to H_2(C)\}=0$ and $V$ is a slice disk exterior, the 4-manifolds $C$ and $V$ are also  $(1)$-cylinders. Since $W$ is a union of $(1)$-cylinders along common boundary components, one can easily show that $W$ is a $(1)$-cylinder following the arguments in the proof of \cite[Proposition~2.6]{Cochran-Kim:2004-1}.

Since $p_1>a_K$ by our choice of $p_1$, where $a_K$ is the top coefficient of $\Delta_K(t)$, we have $\rank_{\Z_{p_1}} H_1(M(K);\Z_{p_1}[t^{\pm 1}]) = \deg \Delta_K(t)$. Since $\Delta_K(t)$ is nontrivial, we have $\deg \Delta_K(t)\ge 2$. Therefore, $\rank_{\Z_{p_1}} I  \ge 1$, and hence $I\ne 0$. Since the $\eta_\ell$ generate $H_1(M(K);\Z_{p_1}[t^{\pm 1}])$, this implies that $i_*(\eta_\ell)\ne 0$ in $H_1(W;\Z_{p_1}[t^{\pm 1}])$ for some $\ell$. Since $\phi(\eta_\ell)\in \Gamma^{(1)}$ and $\Gamma^{(1)}$ injects into $(\pi_1W^{(1)}/\pi_1W^{(2)})\otimes \Z_{p_1}\cong H_1(W;\Z_{p_1}[t^{\pm 1}])$, which is a $\Z_{p_1}$-vector space, it follows that $\phi(\eta_\ell)$ has order 0 or $p_1$. But since $i_*(\eta_\ell)\ne 0$,  we have $\phi(\eta_\ell)\ne 0$. Therefore, $\phi(\eta_\ell)$ has order $p_1$. 
\end{proof}

Now by (1)-(4) and Lemma~\ref{lemma:J_i}(2), we conclude that 
\[
\rhot(M(K), \phi) = S_\Gamma(W) \le -\frac{1}{p_1} \sum_{r=0}^{p_1-1}\sigma_{J_1}(e^{2\pi r\sqrt{-1}/p_1}) < -C_K,
\]
which contradicts our choice of $C_K$. This proves Theorem~\ref{theorem:main-theorem}(2).

\begin{remark}
From the viewpoint of the solvable filtration $\{\F_n\}$ in \cite{Cochran-Orr-Teichner:1999-1}, for each $i$, $K_i\in \F_1$ since $\eta_\ell\in (\pi_1M(K))^{(1)}$ and $J_i\in \F_0$ (a knot with zero Arf invariant lies in $\F_0$). Also, the proof for Theorem~\ref{theorem:main-theorem}(2) is still available when $V$ is a (1.5)-solution, and  hence the knots $K_i$ are, in fact, linearly independent in $\F_1/\F_{1.5}$.
\end{remark}

\providecommand{\bysame}{\leavevmode\hbox to3em{\hrulefill}\thinspace}
\providecommand{\MR}{\relax\ifhmode\unskip\space\fi MR }
\providecommand{\MRhref}[2]{%
  \href{http://www.ams.org/mathscinet-getitem?mr=#1}{#2}
}
\providecommand{\href}[2]{#2}

\end{document}